\newtheorem{theorem}{Theorem}[subsection]
\newtheorem{corollary}[theorem]{Corollary}
\newtheorem{proposition}[theorem]{Proposition}
\newtheorem{lemma}[theorem]{Lemma}
\newtheorem{definition}[theorem]{Definition}
\newtheorem{example}[theorem]{Example}
\newtheorem{remark}[theorem]{Remark}
\numberwithin{equation}{section}
\title{The ladder crystal}
\author{Chris Berg\\
 University of California at Davis, Davis, California, USA \\ Fields Institute, Toronto, ON, Canada\\
\small \texttt{cberg@fields.utoronto.edu}\\
}
\begin{document}


\newlength\cellsize \setlength\cellsize{18\unitlength}
\savebox2{%
\begin{picture}(18,18)
\put(0,0){\line(1,0){18}}
\put(0,0){\line(0,1){18}}
\put(18,0){\line(0,1){18}}
\put(0,18){\line(1,0){18}}
\end{picture}}
\newcommand\cellify[1]{\def\thearg{#1}\def\nothing{}%
\ifx\thearg\nothing
\vrule width0pt height\cellsize depth0pt\else
\hbox to 0pt{\usebox2\hss}\fi%
\vbox to 18\unitlength{
\vss
\hbox to 18\unitlength{\hss$#1$\hss}
\vss}}
\newcommand\tableau[1]{\vtop{\let\\=\cr
\setlength\baselineskip{-16000pt}
\setlength\lineskiplimit{16000pt}
\setlength\lineskip{0pt}
\halign{&\cellify{##}\cr#1\crcr}}}
\savebox3{%
\begin{picture}(15,15)
\put(0,0){\line(1,0){15}}
\put(0,0){\line(0,1){15}}
\put(15,0){\line(0,1){15}}
\put(0,15){\line(1,0){15}}
\end{picture}}
\newcommand\expath[1]{%
\hbox to 0pt{\usebox3\hss}%
\vbox to 15\unitlength{
\vss
\hbox to 15\unitlength{\hss$#1$\hss}
\vss}}

\maketitle

\begin{abstract}
In this paper I introduce a new description of the crystal $B(\Lambda_0)$ of $\widehat{\mathfrak{sl}_\ell}$. 
As in the Misra-Miwa model of $B(\Lambda_0)$, the nodes of this crystal are indexed by partitions and 
the $i$-arrows correspond to adding a box of residue $i$. I then show that the two models are equivalent by interpreting
the operation of regularization introduced by James as a crystal isomorphism.
\end{abstract}

\section{Introduction}

The main goal of this paper is to give a combinatorial description of the crystal of the basic representation of
$\widehat{\mathfrak{sl}_\ell}$. Misra and Miwa previously gave such a description which involved $\ell$-regular 
partitions, and which I will denote as $reg_\ell$. My description, denoted $ladd_\ell$, satisfies the 
following properties:

\begin{itemize}
\item The nodes of $ladd_\ell$ are partitions, and there is an $i$-arrow from $\lambda$ to $\mu$ only when the difference $\mu \setminus \lambda$ is a box of residue $i$. 
\item $reg_\ell \cong ladd_\ell$ and this crystal  isomorphism yields an interesting 
bijection on the nodes.  The map being used for the isomorphism has been well studied \cite{BOX}, but 
never before in the context of a crystal isomorphism.
\item The partitions which are nodes of $ladd_\ell$ can be identified by a simple combinatorial condition.
\end{itemize}

\subsection{Background and Previous Results}
Let $\lambda$ be a partition of $n$ (written $\lambda \vdash n$) and $\ell \geq 3$ be an integer. We will 
use the convention $(x,y)$ to denote the box which sits in the $x^{\textrm{th}}$ row and the $y^{\textrm{th}}$
 column of the Young diagram of $\lambda$. $\mathcal{P}$ will denote the set 
of all partitions. An \textit{$\ell$-regular partition} is one in which no part occurs $\ell$ or more times.
To each box $(x,y)$ in a Young diagram of $\lambda$, the \textit{residue} of that box is the difference $y-x$ taken modulo $\ell$.

For two partitions $\lambda$ and $\mu$ of $n$, 
we say that $\lambda \leq \mu$ if $\sum_{j=1}^i \lambda_j \leq \sum_{j=1}^i \mu_j$ for all $i$. This order is
 usually called the \textit{dominance order}.

The \textit{hook length} of the $(a,c)$ box of $\lambda$ is defined to be the number of boxes 
to the right of or below the box $(a,c)$, including the box $(a,c)$ itself. It will be denoted 
\textit{$h_{(a,c)}^{\lambda}$}.
The \textit{arm} of the $(a,c)$ box of $\lambda$ is defined to be the number of boxes to the
 right of the box $(a,c)$, \textit{not} including the box $(a,c)$. It will be denoted $arm(a,c)$.  

\subsubsection{Ladders}
For any box $(a,b)$ in the Young diagram of 
$\lambda$, the \textit{ladder} of $(a,b)$ is the set of all positions $(c,d)$ which satisfy $\frac{c-a}{d-b} =
 \ell-1$ and $c,d >0$.

\begin{remark}
The definition implies that two positions in the same ladder will share the same residue. An $i$-ladder will be a
 ladder which has residue $i$.
\end{remark}
 
\begin{example}
Let $\lambda = (3,3,1)$, $\ell = 3$. Then there is a 1-ladder which contains the positions 
$(1,2)$ and $(3,1)$, and a different 1-ladder which has the position $(2,3)$ in $\lambda$ and the 
positions $(4,2)$ and $(6,1)$ not in $\lambda$. In the picture below, lines are drawn through the 
different 1-ladders.

\begin{center}
$ \begin{array}{cc}
\put (13,-38){\line(1,2){27}}
\put (68,17){\line(-1,-2){52}}

&
\tableau{0&1&2\\
2&0&1\\
1}
\end{array}$
\end{center}
\end{example}

\subsubsection{Regularization} Regularization is a map which takes a partition to a $p$-regular partition.  For a given $\lambda$, move all of the boxes up to the top of their respective ladders. 
The result is a partition, and that partition is called the \textit{regularization} of $\lambda$, and is 
denoted $\mathcal{R} \lambda$. The following theorem contains facts about regularization originally 
due to James \cite{Jold} (see also \cite{JM}).
\begin{theorem}\label{reg_prop} Let $\lambda$ be a partition. Then
\begin{itemize}
\item $\mathcal{R} \lambda$ is $\ell$-regular;
\item $\mathcal{R} \lambda = \lambda$ if and only if $\lambda$ is $\ell$-regular.
\end{itemize}
\end{theorem}

Regularization provides us with an equivalence relation on the set of partitions. Specifically, we say $\lambda \sim \mu$ if $\mathcal{R} \lambda = \mathcal{R} \mu$. The equivalence classes are called \textit{regularization classes}, and the class of a partition $\lambda$ is denoted $\mathcal{RC}(\lambda) := \{ \mu \in \mathcal{P} : \mathcal{R}\mu = \mathcal{R}\lambda \}$.

\begin{example}\label{reg_example} Let $\lambda = (2,2,2,1,1,1)$ and let $\ell = 3$. Then $\mathcal{R}\lambda = (3,3,2,1)$. Also, 
\[\mathcal{RC}(\lambda) = \{ (2, 2, 2, 1, 1, 1),
(2, 2, 2, 2, 1),
(3, 2, 1, 1, 1, 1),\]
\[
(3, 2, 2, 2),
(3, 3, 1, 1, 1),
(3, 3, 2, 1)    \}\].

\begin{center}
$\begin{array}{lcr}
\tableau{0&1\\
2&0\\
1&2\\
0\\
2\\
1}
&
\displaystyle
\xrightarrow{\mathcal{R}}
&
\tableau{0&1&2\\
2&0&1\\
1&2\\
0}

\end{array}$
\end{center}
\end{example}

\subsection{Summary of results from this paper}
 In Section \ref{old_crystal} we recall the description 
of the crystal $B(\Lambda_0)$  of $\widehat{\mathfrak{sl}_{\ell}}$ involving $\ell$-regular partitions. 
In Section \ref{new_crystal} we give our new description of the crystal $B(\Lambda_0)$. Section \ref{locked} gives a new procedure for finding an 
inverse for the map of regularization. Section \ref{crystal_lemmas} reinterprets the classical
 crystal rules in the combinatorial framework of the new crystal rules. Section \ref{reg_and_crystal} 
contains the proof that the two descriptions of the crystal $B(\Lambda_0)$ are isomorphic, an isomorphism 
being given by regularization.

\section{Classical Description of $reg_\ell$}\label{old_crystal}
\subsection{Introduction}
In this section, we recall a description of the crystal graph $B(\Lambda_0)$ 
first described by Misra and Miwa \cite{MM}. 

\subsection{Crystals}
We start by giving a notion of a crystal. 
Informally, we will say that a crystal of $\widehat{\mathfrak{sl}_\ell}$ is a set $B$ together with operators $\widetilde{e}_i,\widetilde{f}_i: B \rightarrow B \cup \{0\}$ for each $i \in \{0,1,\dots,\ell-1\}$ satisfying:

\begin{itemize}
\item $\widetilde{e}_i a = b$ if and only if $\widetilde{f}_i b = a$ for $a,b \in B$.
\item For each $b \in B$ and $i \in  \{0,1,\dots,\ell-1\}$ there exists an $n$ such that $\widetilde{f}_i^n b = \widetilde{e}_i^n b= 0$.
\end{itemize}

We view the crystal $B$ as a graph with nodes coming from $B$ and an $i$ colored directed edge from $a$ to $b$ whenever $\widehat{f}_i a= b$.

\begin{remark}

The crystal we study, $B(\Lambda_0)$ can be interpreted as the \textit{crystal basis} of the basic representation $V(\Lambda_0)$ of $\widehat{\mathfrak{sl}_\ell}$. It is not the intention of the author to give a full description of the theory of crystal basis; such definitions can be found in Hong and Kang's book \cite{HK} or in the work of Kashiwara \cite{K}. Instead, I will focus a well known combinatorial description of $B(\Lambda_0)$ and give a combinatorial isomorphism to my own combinatorial construction.
\end{remark}

 \subsection{Classical description of the crystal $reg_\ell$} We look at the crystal $B(\Lambda_0)$ of the irreducible highest weight module $V(\Lambda_0)$ of the affine Lie algebra $\widehat{\mathfrak{sl}_{\ell}}$ (also called the basic representation of $\widehat{\mathfrak{sl}_{\ell}}$). In the Misra-Miwa description,
the nodes of $reg_\ell$ are $\ell$-regular partitions. The set of nodes will be denoted $B := \{ \lambda \in \mathcal{P} : \, \lambda \textrm{ is } \ell \textrm{-regular} \} $. We will describe the arrows of $reg_\ell$ below. 

We view the Young diagram for $\lambda$ as a set of boxes, with the residue $b-a \mod \ell$ written into the box $(a,b)$. A position in $\lambda$ is said to be a removable $i$-box if it has residue $i$ and after removing that box the remaining diagram is still a partition. A position not in $\lambda$ is an addable $i$-box if it has residue $i$ and adding that box to $\lambda$ yields a partition.

For a fixed $i$, ($0 \leq i < \ell$), we place  $-$ in each removable $i$-box and $+$ in each addable $i$-box. The $i$-signature of $\lambda$ is the word of $+$ and $-$'s in the diagram for $\lambda$, written from bottom left to top right. The reduced $i$-signature is the word obtained after repeatedly removing from the $i$-signature all adjacent pairs $- +$. The resulting word will now be of the form $+ \dots +++--- \dots -$. The positions corresponding to $-$'s in the reduced $i$-signature are called \textit{normal $i$-boxes}, and the positions corresponding to $+$'s are called \textit{conormal $i$-boxes}. $\varepsilon_i(\lambda)$ is defined to be the number of normal $i$-boxes of $\lambda$, and $\varphi_i(\lambda)$ is defined to be the number of conormal $i$-boxes. If there are any $-$ signs in the reduced $i$-signature, the position corresponding to the leftmost one is called the \textit{good $i$-box} of $\lambda$. If there are any $+$ signs in the reduced $i$-signature, the position corresponding to the rightmost one is called the \textit{cogood $i$-box}. All of these definitions can be found in Kleshchev's book \cite{Kl}.

We recall the action of the crystal operators on $B.$ The crystal operator $\widetilde{e}_{i}: B \xrightarrow{i} B \cup \{0\}$ assigns to a partition  $\lambda$ the partition $\widetilde{e}_{i}(\lambda) = \lambda \setminus x$, where $x$ is the good $i$-box of $\lambda$. If no such box exists, then $\widetilde{e}_{i}(\lambda)=0$. It can be easily shown that $\varepsilon_i(\lambda) = max\{k : \widetilde{e}_{i}^k \lambda \neq 0\}$.

Similarly, $\widetilde{f}_{i}: B \xrightarrow{i} B \cup \{0\}$ is the operator which assigns to a partition  $\lambda$ the partition $\widetilde{f}_{i}(\lambda) = \lambda \cup x$, where $x$ is the cogood $i$-box of $\lambda$. If no such box exists, then $\widetilde{f}_{i}(\lambda)=0$. It can be easily shown that $\varphi_i(\lambda) = max\{k : \widetilde{f}_{i}^k \lambda \neq 0\}$.

For $i \in \mathbb{Z}/ \ell \mathbb{Z}$, we write $\lambda \xrightarrow{i} \mu$ to stand for $\widetilde{f}_{i} \lambda = \mu$. We say that there is an $i$-arrow from $\lambda$ to $\mu$. Note that $\lambda \xrightarrow{i} \mu$ if and only if $\widetilde{e}_{i} \mu = \lambda$. A maximal chain of consecutive $i$-arrows will be called an $i$-string. We note that the empty partition $\emptyset$ is the unique highest weight node of the crystal ( i.e. it is the unique $\ell$-regular partition satisfying $\widetilde{e}_{i} \emptyset = 0 $ for every $i \in \mathbb{Z} / \ell \mathbb{Z}$.) For a picture of a part of this crystal graph, see \cite{LLT} for the cases $\ell = 2$ and $3$. 
  
For the rest of this paper, $\varphi = \varphi_i(\lambda)$ and $\varepsilon = \varepsilon_i(\lambda)$.

\begin{center}
\begin{figure}
\centering
\includegraphics{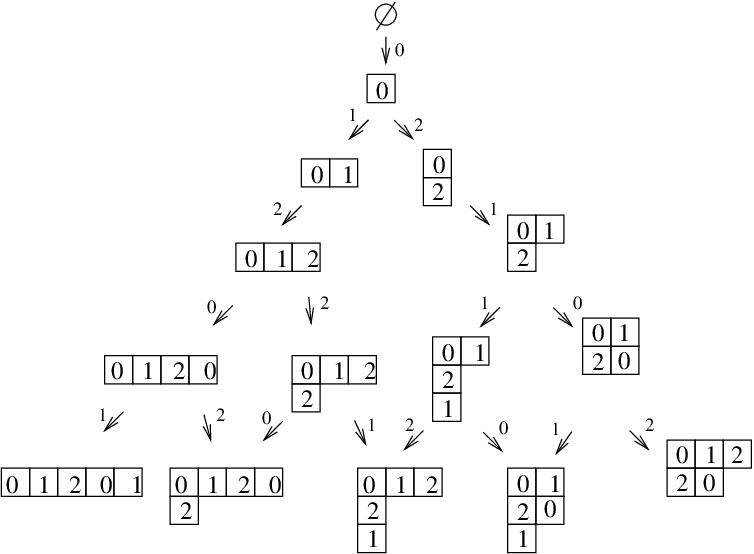}
\caption{The first 6 levels of $reg_\ell$ for $\ell =3$}
\end{figure}
\end{center}

\section{The Ladder Crystal: $ladd_\ell$}\label{new_crystal}
\subsection{The ladder crystal}
For $0\leq i < \ell$, we define operators $\widehat{f}_{i}$ (and $\widehat{e}_i$) acting on partitions, taking a partition of $n$ to a partition of $n+1$ (resp. $n-1$) (or 0) in the following manner. Given $\lambda \vdash n$, first draw all of the $i$-ladders of $\lambda$ onto its Young diagram. Label any addable $i$-box with a $+$, and any removable $i$-box with a $-$. Now, write down the word of $+$'s and $-$'s by reading from leftmost $i$-ladder to rightmost $i$-ladder and reading from top to bottom on each ladder. This is called the \textit{ladder $i$-signature} of $\lambda$. From here, cancel any adjacent $-+$ pairs in the word, until you obtain a word of the form $+\dots+-\dots-$. This is called the \textit{reduced ladder $i$-signature of $\lambda$}. All positions associated to a $-$ in the reduced ladder $i$-signature are called \textit{ladder normal $i$-boxes} and all positions associated to a $+$ in the reduced ladder $i$-signature are called \textit{ladder conormal $i$-boxes}. The position associated to the leftmost $-$ is called the \textit{ladder good $i$-box} and the position associated to the rightmost $+$ is called the \textit{ladder cogood $i$-box}. Then we define $\widehat{f}_{i} \lambda$ to be the partition obtained by adding the ladder cogood $i$-box to $\lambda$. If no such box exists, then $\widehat{f}_{i} \lambda = 0$. Similarly, $\widehat{e_i} \lambda$ is the partition $\lambda$ with the ladder good $i$-box removed. If no such box exists, then $\widehat{e_i} \lambda = 0$. We then define $\widehat{\varphi}_i(\lambda)$ to be the number of ladder conormal $i$-boxes of $\lambda$ and $\widehat{\varepsilon}_i(\lambda)$ to be the number of ladder normal $i$-boxes. It can be shown that $\widehat{\varphi_i}(\lambda) = \max \{ k : \widehat{f}_i^k \lambda \neq 0 \}$ and that $\widehat{\varepsilon_i}(\lambda) = \max \{ k : \widehat{e}_i^k \lambda \neq 0 \}$. For the rest of the paper, $\widehat{\varphi} = \widehat{\varphi}_i(\lambda)$ and $\widehat{\varepsilon} = \widehat{\varepsilon}_i(\lambda)$. 

\begin{remark}
The only difference between $\widehat{f}_i$ and $\tilde{f}_i$ is in how the boxes are ordered. $\tilde{f}_i$ reads boxes from bottom to top whereas $\widehat{f}_i$ reads boxes down ladders, starting with the leftmost ladder. 
\end{remark}

We now define $ladd_\ell$ to be the connected component obtained by starting with the empty partition and using the crystal operators $\widehat{f}_i$. 

\begin{remark}
It remains to be shown that this directed graph is a crystal. To see this, we will show that it is isomorphic to $reg_\ell$. 
\end{remark}

\begin{remark} 
Those who study crystals know that they also have a weight function assigned to the nodes of the graph. Here, just as in the classical description $reg_\ell$, the weight of a partition $\lambda$ is given by $wt(\lambda) = \Lambda_0 - \sum_i a_i \alpha_i$, where $a_i$ denotes the number of boxes of $\lambda$ of residue $i$.
\end{remark}

\begin{example}\label{5311111}
Let $\lambda = (5,3,1,1,1,1,1)$ and $\ell = 3$. Then there are four addable 2-boxes for $\lambda$. In the leftmost 2-ladder (containing position (2,1)) there are no addable (or removable) 2-boxes. In the next 2-ladder (containing position (1,3)) there is an addable 2-box in position (3,2). In the next 2-ladder (containing position (2,4)), there are two addable 2-boxes, in positions (2,4) and (8,1). In the last drawn 2-ladder (containing position (1,6)) there is one addable 2-box, in position (1,6). There are no removable 2-boxes in $\lambda$. Therefore the ladder 2-signature (and hence reduced ladder 2-signature) of $\lambda$ is $+_{(3,2)}+_{(2,4)}+_{(8,1)}+_{(1,6)}$ (Here, we have included subscripts on the $+$ signs so that the reader can see the correct order of the $+$'s). Hence ${\widehat{f}_2} \lambda = (6,3,1,1,1,1,1)$, $({\widehat{f}_2})^2 \lambda = (6,3,1,1,1,1,1,1)$, $({\widehat{f}_2})^3 \lambda = (6,4,1,1,1,1,1,1)$ and $({\widehat{f}_2})^4 \lambda = (6,4,2,1,1,1,1,1)$. $({\widehat{f}_2})^5 \lambda = 0$.

\begin{center}
$\begin{array}{cc}
\put (10,-29){\line(1,2){23.5}}
\put (10,-82){\line(1,2){50}}
\put (15,-126){\line(1,2){72}}
\put (41,-126){\line(1,2){72}}
\put (106,6){2}
\put (70,-12){2}
\put (34,-30){2}
\put (16, -120){2}

& \tableau{0&1&2&0&1\\
2&0&1\\
1\\
0\\
2\\
1\\
0
}
\end{array}$
\end{center}
\end{example}

\begin{center}
\begin{figure}
\centering
\includegraphics{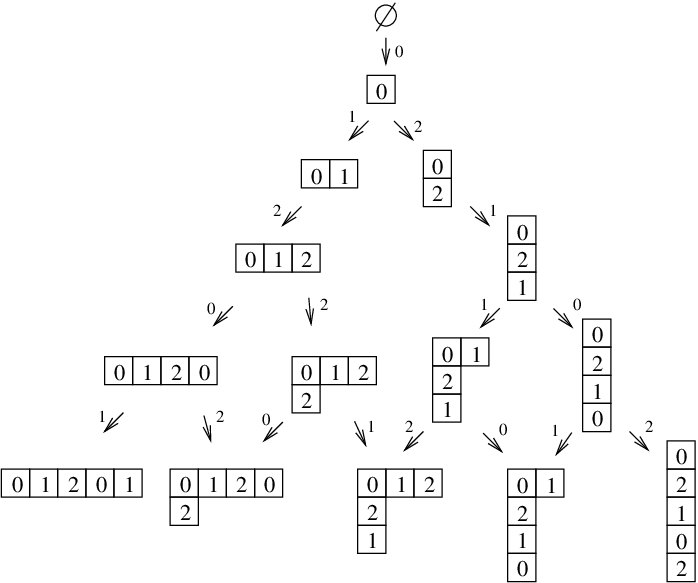}
\caption{The first 6 levels of $ladd_\ell$ for $\ell =3$}
\end{figure}
\end{center}

From this description, it is not obvious that this is a crystal. However, we will soon show that it is isomorphic to $reg_\ell$. 

We end this section by proving a simple property of $\widehat{e}_i$ and $\widehat{f}_i$.
\begin{lemma}\label{e_inverse_f} $\widehat{e}_i \lambda = \mu$ if and only if $\widehat{f}_i \mu = \lambda$.
\end{lemma}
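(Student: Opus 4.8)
\textbf{Proof plan for Lemma \ref{e_inverse_f}.}

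The plan is to unwind the definitions of $\widehat{e}_i$ and $\widehat{f}_i$ and show that, for a fixed residue $i$, the operation ``remove the ladder good $i$-box'' and the operation ``add the ladder cogood $i$-box'' are mutually inverse on the set of partitions for which they are defined. The central observation is that both operators are governed by the \emph{same} combinatorial object: the ladder $i$-signature of a partition, read from leftmost to rightmost $i$-ladder and top-to-bottom within each ladder. So the first step is to compare the ladder $i$-signature of $\lambda$ with that of $\mu$ when $\mu = \widehat{e}_i \lambda$, i.e. when $\mu$ is obtained from $\lambda$ by deleting the box $x$ corresponding to the leftmost $-$ in the reduced ladder $i$-signature.

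The key point is that deleting $x$ from $\lambda$ changes the $i$-signature only \emph{locally}: the box $x$, which contributed a $-$ in the word for $\lambda$, is no longer present in $\mu$, but the position $x$ is now an addable $i$-box of $\mu$, so it contributes a $+$ to the word for $\mu$ \emph{in exactly the same slot} (same ladder, same vertical position on that ladder). One must also check that no other entries of the signature word change: a box adjacent to $x$ could a priori change from addable to removable or vice versa, but because $x$ is the good $i$-box (leftmost $-$ after cancellation), the removable/addable $i$-boxes at the positions that could be affected — the box directly above $x$ in its column, the box directly left of $x$ in its row, and the addable corner positions near $x$ — are controlled; in fact for residue reasons the only position whose status flips is $x$ itself (its neighbors have residue $i\pm 1$, not $i$), so the $i$-signature word of $\mu$ is obtained from that of $\lambda$ by replacing the single symbol ``$-$ at slot $x$'' with ``$+$ at slot $x$.''

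Given that, the cancellation procedure does the rest. In the word for $\lambda$, $x$ is the leftmost uncancelled $-$; everything to its left cancels completely (reduces to a string of $+$'s only, since the reduced signature has the form $+\cdots+-\cdots-$ and $x$ is the first $-$). Replacing that $-$ at slot $x$ by a $+$ therefore produces, after cancellation, a word whose rightmost uncancelled $+$ sits exactly at slot $x$ — because all symbols strictly left of $x$ still cancel among themselves down to $+$'s, and the old $-$'s to the right of $x$ are unchanged and cannot cancel against the new $+$ at $x$ (a $+$ can only cancel with a $-$ to its \emph{right} forming $-+$... wait: cancellation removes adjacent pairs $-+$, so a $+$ cancels with a $-$ immediately to its left). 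So the new $+$ at slot $x$ survives as the rightmost $+$ iff no $-$ immediately precedes it after reduction, which is exactly the situation since everything to the left of $x$ reduced to $+$'s. Hence $x$ is the ladder cogood $i$-box of $\mu$, and $\widehat{f}_i \mu = \mu \cup x = \lambda$. The converse direction is symmetric: if $\widehat{f}_i \mu = \lambda$, then $\lambda$ is $\mu$ with the ladder cogood $i$-box $x$ added, the same local analysis shows the $i$-signature of $\lambda$ is that of $\mu$ with the $+$ at slot $x$ replaced by a $-$, and running the reduction shows $x$ is the ladder good $i$-box of $\lambda$, so $\widehat{e}_i \lambda = \mu$. (This also implicitly handles the $0$ cases: $\widehat{e}_i\lambda = 0$ exactly when there is no uncancelled $-$, and one checks this is consistent.)

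The main obstacle I anticipate is the bookkeeping in the ``local change'' step: one must argue carefully that reading order within a ladder, and the interaction of $x$ with the boxes in neighbouring ladders, is unaffected by the deletion — i.e. that passing to $\mu$ genuinely permutes only the one symbol at slot $x$ and preserves the left-to-right, top-to-bottom order of all the others. This is a standard fact about the tableau/ladder combinatorics (it is the ladder analogue of the classical statement that $\widetilde e_i$ and $\widetilde f_i$ are mutual inverses, proved via the reduced $i$-signature), so I would state it cleanly and verify it by the residue argument above rather than belabour it.
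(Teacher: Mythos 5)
Your proposal is correct and follows essentially the same route as the paper: both arguments track the ladder $i$-signature, observe that passing between $\lambda$ and $\mu$ flips exactly one symbol (at the slot of the good/cogood box, since neighbouring positions have residue $i\pm 1$) and introduces no new cancellation, and conclude that the good $i$-box of $\lambda$ and the cogood $i$-box of $\mu$ occupy the same slot. The only difference is cosmetic --- you start from $\widehat{e}_i\lambda=\mu$ while the paper starts from $\widehat{f}_i\mu=\lambda$ --- and your write-up is somewhat more detailed about the local residue bookkeeping than the paper's.
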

\begin{proof}

Suppose $\widehat{f}_i \mu = \lambda$. It is enough to show that the ladder $i$-good box of $\lambda$ is the ladder $i$-cogood box of $\mu$. This is true because adding the $i$-cogood box of $\mu$ does not cause cancellation in the reduced ladder $i$-signature (if it did, then there would have been a + in the reduced ladder $i$-signature to the right of the cogood position).
Thus $\widehat{e}_i \lambda = \mu$. The other direction is similar.

\end{proof}

\section{Deregularization}\label{locked}

The goal of this section is to provide a method for finding the smallest partition in dominance order in a given regularization class. It is nontrivial to show that a smallest partition exists. We use this result to show that our new description of the crystal $B(\Lambda_0)$ has nodes which are least dominant in their regularization classes. All of the work of this section is inspired by Brant Jones of UC Davis, who gave the first definition of a locked box.

\subsection{Locked Boxes}

Finding all of the partitions which belong to a regularization class is not easy. The definition of locked boxes below formalizes the concept that some boxes in a partition cannot be moved down their ladders if one requires that the new diagram remain a partition.

\begin{definition}
For a partition $\lambda$, we label boxes of $\lambda$ as \textit{locked} by the following procedure:
\begin{enumerate}
\item If a box $x$ has a locked box directly above it (or is on the first row) and every unoccupied position in $\mathfrak{L}x$, lying below $x$, has an unoccupied position directly above it then $x$ is locked. Boxes locked for this reason are called type I locked boxes.
\item If a box $y$ is locked, then every box to the left of $y$ in the same row is also locked. Boxes locked for this reason are called type II locked boxes.

\end{enumerate}
Boxes which are not locked are called \textit{unlocked}.  \end{definition}

\begin{remark}
Locked boxes can be both type I and type II.
\end{remark}

\begin{example}
Let $\ell = 3$ and let $\lambda = (7,5,4,3,1,1)$. 
Then labelling the locked boxes for $\lambda$ with an $L$ and the unlocked boxes with a $U$ yields the picture below.

\begin{center}
$\tableau{L&L&L&L&L&L&L\\
L&L&L&L&U\\
L&L&U&U\\
L&L&U\\
L\\
L}
$
\end{center}
\end{example}

The following lemmas follow from the definition of locked boxes.
\begin{lemma}\label{locked_up}
If $(a,b)$ is locked and $a>1$ then $(a-1,b)$ is locked. Equivalently, all boxes which sit below an unlocked box in the same column are unlocked.

\end{lemma}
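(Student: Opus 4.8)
The plan is to argue directly from the two clauses of the definition of locked boxes, treating the type I and type II cases separately. Suppose $(a,b)$ is locked and $a > 1$; I want to show $(a-1,b)$ is locked.

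First I would dispose of the case where $(a,b)$ is a type I locked box. By clause (1) of the definition, $(a,b)$ being type I locked requires that the box directly above it, namely $(a-1,b)$, is itself locked (the alternative, that $(a,b)$ lies on the first row, is excluded since $a > 1$). So in this case there is nothing to prove: $(a-1,b)$ is locked by hypothesis. The remaining case is that $(a,b)$ is locked \emph{only} because it is type II, i.e.\ there is a box $(a,b')$ with $b' > b$ in the same row $a$ which is locked, and $(a,b)$ inherits lockedness by clause (2). Here the idea is to run an induction on $b' - b$: it suffices to handle the case where $(a,b)$ is immediately to the left of a locked box, i.e.\ $(a,b+1)$ is locked, since clause (2) then propagates leftward one column at a time. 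So assume $(a,b+1)$ is locked. By the previous case analysis applied to $(a,b+1)$ — either it is type I locked, giving $(a-1,b+1)$ locked, or it is type II locked, and by the inductive hypothesis on row position we may again assume it arises from a type I box further right whose lockedness pushes up a row — we may conclude that $(a-1, b+1)$ is locked. But $(a-1,b)$ is directly to the left of $(a-1,b+1)$ in row $a-1$, so by clause (2) again, $(a-1,b)$ is locked.

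The one subtlety to be careful about is the bookkeeping in the type II case: one should phrase the induction cleanly so as not to create a circularity between ``push left'' (clause 2) and ``push up'' (the statement being proved). The clean way is to induct on the column index $b$ for fixed row $a$, going from right to left: the rightmost locked box in row $a$ must be type I (it cannot be type II, as there is no locked box to its right in that row), so the argument of the first paragraph applies to it; then each box further left in row $a$ is handled using the conclusion for the box immediately to its right. I expect this bookkeeping to be the only real obstacle — the geometric content is entirely contained in clause (1) forcing the box above a type I locked box to be locked, and clause (2) being closed under moving left within a row.

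Finally, the ``equivalently'' reformulation is just the contrapositive: if $(a-1,b)$ is unlocked, then $(a,b)$ cannot be locked; iterating, every box strictly below an unlocked box in the same column is unlocked. This follows formally from the implication just proved, with no additional work.
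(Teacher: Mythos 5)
Your proof is correct and follows essentially the same route as the paper: the type I case is immediate from clause (1), and the type II case is resolved by locating a locked box further right in row $a$ whose upward neighbor is locked, then propagating leftward in row $a-1$ via clause (2). The paper simply asserts outright that a type II locked box has a \emph{type I} locked box somewhere to its right in the same row, whereas your right-to-left induction on the column index makes that termination argument explicit — a minor gain in rigor, not a different proof.
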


\begin{proof}
If $(a,b)$ is a type I locked box then by definition $(a-1,b)$ is locked. If $(a,b)$ is a type II locked box and not type I then there exists a $c$ with $c>b$ such that $(a,c)$ is a type I locked box. But then by definition of type I locked box, $(a-1,c)$ is locked. Then $(a-1,b)$ is  a type II locked box.
\end{proof}

\begin{lemma}\label{locked_up_ladder}
If there is a locked box in position $(a, b)$ and there is a box in position $(a - \ell +1, b+1)$ then the box $(a - \ell+1, b+1)$ is locked. 
\end{lemma}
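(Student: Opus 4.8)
\textbf{Proof proposal for Lemma \ref{locked_up_ladder}.}

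The plan is to argue directly from the definition of locked box, mirroring the structure of the proof of Lemma \ref{locked_up}: split into the cases where $(\alpha,\beta)$ is type I locked and where it is only type II locked, and in each case exhibit a reason why $(\alpha-(\ell-1),\beta+1)$ must be locked. First I would observe that the box $(\alpha-(\ell-1),\beta+1)$ lies in the \emph{same ladder} as $(\alpha,\beta)$: indeed $\frac{(\alpha-(\ell-1))-\alpha}{(\beta+1)-\beta} = \frac{-( \ell-1)}{1} = -(\ell-1)$, so it is the next position up the ladder through $(\alpha,\beta)$. This is the geometric fact that drives everything; it means that whatever obstruction forces $(\alpha,\beta)$ to be stuck in its ladder also lives one step higher.

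Suppose first that $(\alpha,\beta)$ is type I locked. By definition this means $(\alpha,\beta)$ has a locked box directly above it (or is in the first row) and every unoccupied space in its ladder lying below it has an unoccupied space directly above it. I want to conclude that $(\alpha-(\ell-1),\beta+1)$ is also type I locked. Its ladder is the same, and the unoccupied spaces in that ladder lying below $(\alpha-(\ell-1),\beta+1)$ form a superset of those lying below $(\alpha,\beta)$ together with possibly $(\alpha,\beta)$ itself — but $(\alpha,\beta)$ is occupied, so the unoccupied spaces below $(\alpha-(\ell-1),\beta+1)$ are exactly those below $(\alpha,\beta)$, each of which (by hypothesis) has an unoccupied space directly above it. So the second condition of type I is inherited. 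For the first condition I need a locked box directly above $(\alpha-(\ell-1),\beta+1)$, i.e. at $(\alpha-\ell,\beta+1)$, or else that $(\alpha-(\ell-1),\beta+1)$ is in the first row; this is where one applies the inductive/recursive nature of the locking procedure, walking up the ladder and the column simultaneously. The remaining case is that $(\alpha,\beta)$ is type II but not type I: then there is some $c>\beta$ with $(\alpha,c)$ type I locked, and since $(\alpha,c)$ is to the right of $(\alpha,\beta)$ there is (because $(\alpha,\beta)\in\lambda$) a box at $(\alpha-(\ell-1),c+1)$ as well — one checks this using the fact that $\lambda$ is a partition and the box $(\alpha-(\ell-1),\beta+1)$ exists — so by the type I case just handled $(\alpha-(\ell-1),c+1)$ is locked, hence by type II every box to its left in row $\alpha-(\ell-1)$ is locked, in particular $(\alpha-(\ell-1),\beta+1)$.

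The main obstacle I anticipate is the bookkeeping in the type I case, specifically verifying the "locked box directly above" requirement at the new position: one has to be careful that the recursion defining locked boxes really does propagate up the ladder, and that the existence hypothesis "there is a box in space $(\alpha-(\ell-1),\beta+1)$" together with $\lambda$ being a partition supplies all the cells needed so that the two clauses of the definition can be checked without gaps. I would organize this by a downward induction on $\alpha$ (or equivalently by tracking the finite chain of boxes up the ladder), using Lemma \ref{locked_up} freely to move locking information up columns; the shape constraints of a Young diagram guarantee that every cell referenced in the argument actually exists.
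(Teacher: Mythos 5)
Your proposal takes the direct route (case analysis on the lock type of $(\alpha,\beta)$), whereas the paper argues by contradiction: it assumes $(\alpha-(\ell-1),\beta+1)$ is unlocked, passes to the \emph{highest} unlocked box $(\gamma,\beta+1)$ in column $\beta+1$ --- for which the box above is automatically locked, so its unlockedness must be witnessed by an unoccupied ladder position below it with an occupied space directly above, together with the absence of a locked box to its right --- transfers that witness one step down the ladder to show $(\gamma+\ell-1,\beta)$ is unlocked, and then uses Lemma \ref{locked_up} to push unlockedness down column $\beta$ to $(\alpha,\beta)$. The contradiction set-up is not cosmetic: it is precisely what supplies the ``box above is locked'' information that your direct argument lacks.

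This matters because your proposal has two genuine gaps. First, in the type I case you correctly reduce to verifying that $(\alpha-\ell,\beta+1)$ is locked (or that $(\alpha-(\ell-1),\beta+1)$ is in row $1$), but you never prove it --- ``the inductive/recursive nature of the locking procedure'' is a gesture, not an argument. It can be repaired (induct on $\alpha$: $(\alpha-1,\beta)$ is locked by Lemma \ref{locked_up}, and the inductive hypothesis applied to $(\alpha-1,\beta)$ locks $(\alpha-\ell,\beta+1)$), but the induction must then carry the full statement for locked boxes of \emph{either} type, which your case split does not set up. Second, and more seriously, the type II case rests on a false assertion: from the existence of a box at $(\alpha-(\ell-1),\beta+1)$ you cannot conclude there is a box at $(\alpha-(\ell-1),c+1)$ for $c>\beta$. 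Since $(\alpha,c)\in\lambda$ one only gets $\lambda_{\alpha-(\ell-1)}\geq\lambda_\alpha\geq c$, so row $\alpha-(\ell-1)$ may end exactly at column $c$, in which case $(\alpha-(\ell-1),c+1)$ does not exist and your source of a locked box to the right of $(\alpha-(\ell-1),\beta+1)$ evaporates. Note also that when $(\alpha,\beta)$ is locked but not type I, the bad unoccupied ladder position below $(\alpha,\beta)$ lies below $(\alpha-(\ell-1),\beta+1)$ on the same ladder, so $(\alpha-(\ell-1),\beta+1)$ cannot be type I either --- the locked box to its right is genuinely needed, and producing it is where the real work lies. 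The paper's contradiction argument sidesteps both problems at once; as written, your proof does not close either of them.
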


\begin{proof}
To show this, suppose that $(a- (\ell-1), b+1)$ is unlocked. Then let $(c, b+1)$ be the highest unlocked box in column $b+1$. The fact that $(c,b+1)$ is unlocked implies that there is an unoccupied position below it, on the same ladder with a box immediately above it. Then the box $(c+\ell-1, b)$ violates the type I locked condition, and it will not have a locked box directly to the right of it ($(c, b+1)$ is unlocked, so  $(c+\ell-1, b+1)$ will be unlocked if it is occupied, by Lemma \ref{locked_up}). Hence $(c+\ell-1, b)$ is unlocked, so $(a, b)$ must be unlocked since it sits below $(c+\ell-1, b)$ (by Lemma \ref{locked_up}), a contradiction.
\end{proof}

For two partitions $\lambda$ and $\mu$ in the same regularization class, 
there may be many ways to move the boxes in $\lambda$ on their ladders to obtain $\mu$. 
We define an \textit{arrangement of} $\mu$ \textit{from} $\lambda$ to be a bijection which assigns each box in the 
Young diagram of $\lambda$ to a box in the same ladder of the Young diagram of $\mu$. An arrangement will be denoted by a set
 of ordered pairs $(x,y)$ with $x \in \lambda$ and $y \in \mu$, where both $x$ and $y$ are in the same ladder,
 and each $x \in \lambda$ and $y \in \mu$ is used exactly once. Such a pair $(x,y)$ denotes that the box $x$ 
from $\lambda$ is moved into position $y$ in $\mu$. We also introduce an ordering of boxes on each ladder; for two positions $x$ and $y$ on the same ladder, we say that $x \prec y$ if the position $x$ lies below $y$ on the ladder which they share.

\begin{remark} We introduce the notation $\mathcal{B}(p)$ to denote the position paired with $p$ in an 
arrangement $\mathcal{B}$, i.e. $(p, \mathcal{B}(p)) \in \mathcal{B}$. 
\end{remark}

\begin{example}
$\lambda = (3,3,1,1,1)$ and $\mu = (2,2,2,2,1)$ are in the same regularization class when $\ell = 3$ 
(see Example \ref{reg_example}). One possible arrangement of $\mu$ from $\lambda$ would be 
\[\mathcal{B} = \{   ((1,1),(1,1)), ((1,2),(1,2)),((1,3),(5,1)),((2,1),(2,1)),\] \[ ((2,2),(2,2)), ((2,3),(4,2)), ((3,1),(3,1)),((4,1),(4,1)), ((5,1),(3,2))\}.\] This corresponds to moving the labeled boxes from $\lambda$ to $\mu$  in the corresponding picture below. Note that $\mathcal{B}((5,1)) \succ (5,1)$, $\mathcal{B}((1,3)) \prec (1,3)$ and $\mathcal{B}((2,3)) \prec (2,3)$. 
\begin{center}

$\begin{array}{lr}
\tableau{1&2&3\\
4&5&6\\
7\\
8\\
9}
&
\tableau{1&2\\
4&5\\
7&9\\
8&6\\
3}
\end{array}$
\end{center}
\end{example}

\subsection{Finding the smallest partition in a regularization class}
 For any partition $\lambda$, to find the smallest partition (with respect to dominance order) in a 
regularization class we first label each box of $\lambda$ as either locked or unlocked as above. Then 
we create a new diagram $\mathcal{S}\lambda$ which moves all unlocked boxes down their ladders, while 
keeping these unlocked boxes in order (from bottom to top), while locked boxes do not move. It is
 unclear that this procedure will yield the smallest partition in $\mathcal{RC}(\lambda)$, or even that 
$\mathcal{S} \lambda$ is a partition. In this subsection, we resolve these issues. To shorten notation, we will use $\mathfrak{L}x$ to denote the ladder which a box $x$ sits in.

\begin{proposition}\label{moveboxesdown} Let $\lambda$ and $\mu$ be partitions in the same regularization class. Then there exists an arrangement $\mathcal{D}$ of $\mu$ from $\lambda$ such that for any locked box $x$ of $\lambda$,  $\mathcal{D}(x) \succeq x$. 
\end{proposition}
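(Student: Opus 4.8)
The plan is to proceed by induction on the number of boxes that are not in their ``home'' position, or equivalently to argue that whenever a locked box is forced down in some arrangement, we can perform a local swap to produce a new valid arrangement in which it does not move down, without creating new downward-moving locked boxes. First I would fix an arbitrary arrangement $\mathcal{B}$ of $\mu$ from $\lambda$, and suppose for contradiction (or as the inductive hypothesis) that some locked box of $\lambda$ moves strictly down under $\mathcal{B}$; among all such, pick the locked box $x = (a,b)$ whose target $\mathcal{B}(x)$ is lowest in its ladder, and among those, pick $x$ itself to be as low as possible.

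The key combinatorial input is the interaction between the two clauses of the definition of locked, and I expect to use Lemmas \ref{locked_up}, \ref{locked_up_ladder}, and \ref{locked_up_ladder}'s proof technique heavily. Since $x$ is locked and moves down, the position directly above it, $(a-1,b)$ (or the first-row condition), is locked by Lemma \ref{locked_up}, and the boxes that would sit below $x$ in its ladder are constrained: if $x$ is type I, every unoccupied space in $x$'s ladder lying below $x$ has an unoccupied space directly above it. The idea is that this forces the box occupying the position $\mathcal{B}(x)$ (a lower rung of $x$'s ladder) to have come from somewhere, and to trace the ``cycle'' in the permutation $\mathcal{B}$ restricted to that ladder: some box $x'$ of $\lambda$ currently in $\lambda$ at a position lower than or equal to $\mathcal{B}(x)$ must move up past $x$'s home row. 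I would then argue this $x'$ together with $x$ can be swapped — pair $x$ with its home position and re-route $x'$ — and that after the swap the box landing at $\mathcal{B}(x)$ still sits in a valid partition shape, using the type I condition on the empty spaces below $x$ and the type II propagation to the left to guarantee that columns to the left of $b$ are deep enough. Handling the type II case reduces to the type I case by locating the type I box $(a,c)$ with $c>b$ in the same row that caused $x$ to be locked, and observing that row $a$ of $\mu$ must be at least as long as in $\lambda$ up to column $c$, hence $(a,b)$ has a box above at its home position in $\mu$.

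The main obstacle I anticipate is the bookkeeping needed to show that the local swap actually yields a \emph{partition} (not a skew shape) and does not introduce a \emph{new} locked box moving down — the second point is what makes a naive ``just swap'' argument fail, and is why the choice of $x$ to be extremal (lowest target, then lowest box) matters: one needs that the only box whose motion changes is $x$ and $x'$, and that $x'$, being unlocked (it moves up, so by Lemma \ref{locked_up} and the extremality it cannot be a locked box moving down, and a locked box cannot move up without violating Lemma \ref{locked_up_ladder}-type reasoning applied upward), stays unlocked or at worst moves up less far. I would finish by noting that iterating this swap strictly decreases $\sum_{x \text{ locked}} (\text{depth of } \mathcal{B}(x) \text{ below home})$, a nonnegative integer, so the process terminates in an arrangement with no locked box moving down, which is the desired arrangement.
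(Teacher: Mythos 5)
Your high-level strategy (choose an extremal downward-moving locked box, swap its target with a suitable partner on the same ladder, and descend via a potential function) has the same skeleton as the paper's proof, which instead minimizes over all arrangements and reaches a one-step contradiction. But the proposal assumes away the core content of the proposition at the key step: you assert that because $x$ moves down, ``some box $x'$ of $\lambda$ at a position lower than or equal to $\mathcal{B}(x)$ must move up past $x$'s home row.'' That is false in general. An arrangement restricted to a single ladder is just a bijection between the occupied rungs in $\lambda$ and the occupied rungs in $\mu$; if the rungs occupied in $\mu$ all sit at or below those occupied in $\lambda$, then every box on that ladder moves weakly down and nothing moves up. The existence of a swap partner $w$ on the ladder of $x$ whose target lies at or above $(a,b)$ --- with $w$ either below $x$, or above $x$ and unlocked --- is precisely what must be proved, and it is where the locked structure enters. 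In the paper this occupies almost the entire proof: for a type II (non--type I) locked box $x$ one compares the number of boxes above row $a$ on the ladder of $x$ with the number on the ladder of the locked box $y$ directly to its right, in both $\lambda$ and $\mu$, and shows that if no partner exists then $\mu$ has strictly more boxes above $(a,b)$ on the right-hand ladder than on the left-hand one, so $\mu$ is not a partition.

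Second, your treatment of the type I case is not an argument. The paper shows that a type I locked box can never move down in \emph{any} arrangement: assuming it does forces an unlocked box, hence an empty position, on the ladder of $(a-1,b)$ above row $a$, and then a chain of type I and type II locked boxes $(a-\sum_i k_i,\, n_i)$ with each $k_i\le\ell-2$ and strictly increasing columns $n_i$ is constructed, which must eventually place a locked box in a position outside $\lambda$, contradicting $\lambda$ being a partition. Your proposed reduction of type II to type I (``row $a$ of $\mu$ must be at least as long as in $\lambda$ up to column $c$'') is likewise unjustified, since row lengths are not preserved when boxes are rearranged within ladders. The potential-function bookkeeping at the end is fine and would close the argument once the swap partner is produced, but as written the proposal omits the two counting arguments that constitute the actual proof.
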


\begin{proof}

To find a contradiction, we suppose that for any arrangement $\mathcal{C}$ of $\mu$ from $\lambda$ there must be a locked box $a$ such that $\mathcal{C}(a) \prec a$.
 Among all of these boxes, we label a box $x_\mathcal{C} $ which is in the highest row and in the furthest right column amongst the boxes in the highest row. 

Among all arrangements of $\mu$ from $\lambda$, let $\mathcal{D}$ be one which has $x_{\mathcal{D}}$ in the lowest row, and amongst all such in the lowest row also has the leftmost column. Let $x= x_{\mathcal{D}}$. 

We will exhibit a box $w$ on $\mathfrak{L}x$, such that $\mathcal{D}(w) \succeq x$ and either $w \prec x$ or $w \succ x$ and is unlocked. If such a box exists, then letting $\mathcal{A} = (\mathcal{D} \setminus \{(x, \mathcal{D}(x)), (w, \mathcal{D}(w))\}) \cup \{(x, \mathcal{D}(w)), (w, \mathcal{D}(x)) \}$ will yield a contradiction, as $x_{\mathcal{A}}$ will be in a position to the left of and/or below $x$, which contradicts our choice of $\mathcal{D}$. 

If there exists a $w$ in $\mathfrak{L}x$, $ w \prec x$, with $\mathcal{D}(w) \succeq x$ in $\mu$ then we are done. So now we assume:

\begin{equation*}\tag{*}\label{assumption} \mathcal{D}(w) \prec  x \textrm{ for every } w \prec x.
\end{equation*}

There are two cases to consider:

\vspace{10pt}
 \noindent\textbf{Case I:} $x$ is a type II locked box and not a type I locked box. 

In this case, there is a locked box directly to the right of $x$ (definition of a type II lock), which we label $y$. 

If $\mathcal{D}(y) = y$ then some box $w$ must satisfy $\mathcal{D}(w) =x$. The assumption implies that $w\succ x$, so $w$ is unlocked (because $x = x_{\mathcal{D}}$ was the highest locked box which moved down according to $\mathcal{D}$) and we are done.

If $\mathcal{D}(y) \neq y$ then $\mathcal{D}(y) \succ y$ (since it is to the right of $x$ and locked).
 If $\tilde{x} \succ x$, is at the end of its row, then $\tilde{x}$ is unlocked (it is not a type II lock because there are no boxes to the right of it, and it can't be a type I lock because $x$ is not a type I lock and Lemma \ref{locked_up} implies that there is an empty position in $\mathfrak{L}x$ directly below a box). If $\mathcal{D}(\tilde{x}) \succeq x$, then we could use $w = \tilde{x}$ and be done. So otherwise we assume all such $\tilde{x}$ satisfy $\mathcal{D}(\tilde{x}) \prec x$. 
 
Similarly, if $\hat{x} \succ x$ which is directly to the left of a box $\hat{y}$ in $\mathfrak{L}y$, and  $\mathcal{D}(\hat{y}) \prec y$, then $\hat{x}$ must be unlocked (it's not a type II lock because $\hat{y}$ is unlocked, being above $x$, and its not a type I lock because $x$ is not). If $\mathcal{D}(\hat{x}) \succeq x$, then we could use $w=\hat{x}$ and be done. So we assume that all such $\hat{x}$ satisfy $\mathcal{D}(\hat{x}) \prec x$. 

Assuming we cannot find any $w$ by these methods, we let
\begin{itemize}
\item $k$ denote the number of boxes $w \succ x$ in $\lambda$,
\item $j$ denote the number of boxes $w\succ y$, in $\lambda$,
\item $k'$ denote the number of boxes $w \succ x$, in $\mu$,
\item $j'$ denote the number of boxes $w \succ y$, in $\mu$,
\item $m$ denote the number of boxes $w \succ y$, in $\lambda$ which satisfy $\mathcal{D}(b) \prec b$ (i.e. the number of boxes which are of the form $\hat{y}$ above).
\end{itemize}

The number of boxes of the form $\tilde{x}$ is then $k-j$. Also, $j' \geq j-m +1$, since the $m$ boxes
 need not move below $x$, but the box in $y$ moves above $x$. 
$k' \leq k - m - (k-j) = j-m $ since the number of boxes in ladder $x$ will go down by at least $m$ for the boxes of the form $\hat{x}$ and $k-j$ for the boxes of the form $\tilde{x}$ (due to (\ref{assumption})). Hence $k' \leq j-m < j-m+1 \leq j'$.   This is a contradiction, as there must be at least as many boxes $w \succ x$ on the ladder of $x$  as there are $v \succ y$ on the ladder of $y$ for $\mu$ to be a partition.

\vspace{10pt}

\noindent
\textbf{Case II:} $x$ is a type I locked box.

We give $x$ coordinates $(a,b)$.

Since $(a,b)$ is a type I lock, the number of boxes $y \preceq (a,b)$ which are in $\lambda$ must be strictly greater than the number of boxes $z \prec (a-1,b)$ which are in $\lambda$.
Since $\mathcal{D}((a,b)) \prec (a,b)$, some box $p \succ (a-1,b)$ satisfies $\mathcal{D}(p) \prec p$
(since no box $y \prec (a,b)$ satisfies $\mathcal{D}(y) \succ (a,b)$, by (\ref{assumption})). 
Since $(a,b)$ is the highest locked box for which $\mathcal{D}((a,b)) \prec (a,b)$, $p$ must be an unlocked box. 

The existence of $p$ implies that there is an empty position $q \succ (a-1,b)$ in $\lambda$. This is implied by Lemma \ref{locked_up_ladder} above, since if every position $r \succ (a-1,b)$ was occupied, then all of those boxes would be locked. 

Let $m$ be the column which contains the lowest such empty position, which has coordinates $(a-1-(m-b)(\ell-1),m)$. Let $(c_1, d_1)$ be the highest empty position for which $(c_1,d_1) \prec (a,b)$ (we know a position must exist since $\mathcal{D}((a,b))\prec (a,b)$ and (\ref{assumption}) implies that all other positions $r \prec (a,b)$ satisfy $\mathcal{D}(r) \prec (a,b)$). Since $(a,b)$ is a type I lock, $(c_1-1, d_1)$ is also an empty position. We know that $(a-1,b)$ is locked since $(a,b)$ is a type I lock. If $(a-1, b)$ is also a type I lock then the position $(c_1-2, d_1)$ must also be empty. Continuing this, if $(a-k, b)$ were a type I lock for every $0 \leq k \leq \ell-2$ then $(c_1- (\ell-1), d_1)$ would have to be empty, but this would contradict the fact that there should be a box in $(c_1-(\ell-1),d_1 +1)$ since it is in $\mathfrak{L}(a,b)$ and $(c_1, d_1)$ was chosen to be the the highest empty position on $\mathfrak{L}(a,b)$ below $(a,b)$. 

So there exists a $k_1$ so that $(a-k_1,b)$ is a type II lock, with $k_1 \leq \ell-2$. Since $(a-k_1, b)$ is a type II lock, there exists a type I locked box in a position $(a-k_1, n_1)$ with $b < n_1 < m$. Let $(c_2, d_2) \prec (a-k_1,n_1)$ denote the highest empty position ($(c_2, d_2)$ exists because the position in column $d_1$ on $\mathfrak{L}(a-k_1, n_1)$ is empty). Since $(a-k_1,n_1)$ is a type I lock, the postition $(c_2-1, d_2)$ is also empty. Continuing as above, if $(a-k_1-k, n_1)$ were a type I lock for $0 \leq k \leq \ell-2$, then $(c_2- (\ell-1), d_2)$ would be empty, which contradicts our choice of $(c_2,d_2)$. Hence there exists a $k_2 \leq \ell-2$ so that $(a-k_1-k_2 , n_1)$ is a type II lock. This implies that there is a type I lock in some position $(a-k_1-k_2, n_2)$ with $n_1 < n_2 < m$. Continuing this, we get a sequences for $k$ and $n$ with each $k_i \leq \ell-2$ and $b < n_1 < n_2 < \dots < n_i < m$. 

I claim that each of the boxes $(a-\sum_i{k_i}, n_i)$ are in a ladder below the ladder of $(a-1,b)$. If this is the case then the sequences for $k$ and $n$ would eventually have to produce a type I locked box in column $m$ or greater, below row $a-1-(m-b)(\ell-1)$.  This contradicts that $\lambda$ is a partition, since there is no box is position $(a-1-(m-b)(\ell-1),m)$.

To show the claim, we just note that each successive type I locked box comes from moving up at most $\ell-2$ and to the right at least one position. These are clearly in a ladder below the ladder of $(a-1, b)$, since ladders move up $\ell-1$ boxes each time they move one box to the right. In particular, because $(a-\sum k_i, n_j)$ stay in a ladder on or below $\mathcal{L}(a-1,b)$, $a-\sum k_i \geq \ell-1$, so that each of the boxes $(a-\sum k_i, n_j)$ are well defined (not above row $1$).

\end{proof}

\begin{corollary}\label{smallestpartition}
Fix $n\in \mathbb{N}$. Suppose for any partition $\mu \vdash n$, $\mathcal{S} \mu$ is also a partition. Then $\mathcal{S} \lambda$ is the smallest partition (in dominance order) in the regularization class of $\lambda$. Futhermore, all boxes of $\mathcal{S} \lambda$ are locked.
\end{corollary}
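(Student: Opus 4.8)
The plan is to prove two things: first, that $\mathcal{S}\lambda$ is minimal in dominance order in $\mathcal{RC}(\lambda)$; second, that every box of $\mathcal{S}\lambda$ is locked. For the minimality, I would take an arbitrary partition $\mu \in \mathcal{RC}(\lambda)$ and show $\mathcal{S}\lambda \leq \mu$. By Proposition \ref{moveboxesdown} there is an arrangement of $\mu$ from $\lambda$ in which no locked box of $\lambda$ moves down; equivalently (composing with $\mathcal{S}_\lambda^{-1}$, which only moves unlocked boxes down as far as possible in order) there is an arrangement of $\mu$ from $\mathcal{S}\lambda$ in which no box of $\mathcal{S}\lambda$ moves down — i.e.\ every box of $\mathcal{S}\lambda$ either stays put or moves up its ladder. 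The key combinatorial point is then that if $\mu$ is obtained from $\nu$ by moving boxes up their ladders (never down), then $\nu \leq \mu$ in dominance order. This is because moving a single box from position $(c,d)$ up to $(c',d')$ in the same ladder has $c' < c$ and $d' > d$, so the partial sums $\sum_{j=1}^i \nu_j$ can only increase: a box leaving row $c$ and entering a higher row $c'$ shifts weight upward, which weakly increases every partial sum $\sum_{j \le i}$. One must be slightly careful that the intermediate configurations need not be partitions, so I would instead argue directly with the column-counting formulation: $\sum_{j=1}^i \mu_j$ counts boxes in the first $i$ rows of $\mu$, and under an up-moving arrangement every box in the first $i$ rows of $\nu$ maps to a box in the first $i$ rows of $\mu$ (its row index only decreases), hence $\sum_{j=1}^i \nu_j \leq \sum_{j=1}^i \mu_j$ for all $i$, giving $\nu = \mathcal{S}\lambda \leq \mu$.

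For the second claim, that all boxes of $\mathcal{S}\lambda$ are locked, I would argue by contradiction. Suppose $\mathcal{S}\lambda$ has an unlocked box; among all unlocked boxes choose one, say $(a,b)$, in the lowest row (and then rightmost). Unlocked means either it fails the type I condition or it fails type II. If it fails type II while $(a,b+1) \in \mathcal{S}\lambda$, then $(a,b+1)$ is unlocked too, contradicting rightmost-ness within the row; handling the interplay with the type I failure and using Lemma \ref{locked_up} (boxes below an unlocked box are unlocked) to propagate downward should force a contradiction with the construction of $\mathcal{S}_\lambda$. The cleanest route is probably this: $\mathcal{S}\lambda \in \mathcal{RC}(\lambda)$, so we may apply $\mathcal{S}$ again; by the minimality just proven, $\mathcal{S}(\mathcal{S}\lambda)$ is also minimal, hence $\mathcal{S}(\mathcal{S}\lambda) = \mathcal{S}\lambda$ (minimal elements in a poset, if a minimum exists, it is unique — and we have shown a minimum exists). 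But $\mathcal{S}$ moves every unlocked box strictly down unless there is nothing below it on the ladder; so if $\mathcal{S}\lambda$ had an unlocked box that could move, $\mathcal{S}(\mathcal{S}\lambda) \neq \mathcal{S}\lambda$. One then has to rule out the degenerate case of an unlocked box with no lower ladder position available — here I would invoke the definition of locked directly: such a box, being at the bottom of its ladder with a locked box (or row 1) above it and the "every unoccupied space below has unoccupied space above" condition vacuously satisfied, is actually locked, a contradiction.

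I expect the main obstacle to be the passage in the first paragraph from "an arrangement of $\mu$ from $\lambda$ not moving locked boxes down" to "an arrangement of $\mu$ from $\mathcal{S}\lambda$ not moving any box down." This requires understanding how $\mathcal{S}_\lambda$ interacts with an arbitrary arrangement: $\mathcal{S}_\lambda$ sends the unlocked boxes of $\lambda$ to the bottom of their ladders in order, and one must check that composing the given arrangement with the inverse of this relocation still yields a valid arrangement (same ladders, bijective) in which no $\mathcal{S}\lambda$-box descends. The subtlety is that an unlocked box of $\lambda$ that \emph{stays put} under the given arrangement to $\mu$ is nonetheless moved down by $\mathcal{S}_\lambda$, so relative to $\mathcal{S}\lambda$ it has "moved up"; one needs that a box of $\mathcal{S}\lambda$ corresponding to a locked box of $\lambda$ (which sits in the same absolute position in both $\lambda$ and $\mathcal{S}\lambda$, since locked boxes don't move) doesn't descend — and that's exactly Proposition \ref{moveboxesdown}, while boxes of $\mathcal{S}\lambda$ coming from unlocked boxes of $\lambda$ sit at ladder-bottoms and hence cannot descend at all. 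Making this bookkeeping precise, rather than any deep new idea, is where the real work lies; everything else is the monotonicity-of-partial-sums observation.
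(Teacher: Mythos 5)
Your proposal is correct and follows essentially the same route as the paper: minimality via Proposition \ref{moveboxesdown} together with the observation that $\mathcal{S}_\lambda$ pushes unlocked boxes as far down their ladders as possible, and the lockedness claim via applying $\mathcal{S}$ a second time and contradicting minimality. If anything, you supply more of the bookkeeping (the partial-sum monotonicity and the degenerate case of an unlocked box with no lower ladder position) than the paper's own two-sentence argument does.
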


\begin{proof}
Let $\mu$ be any partition in the regularization class of $\lambda$. Then by Proposition \ref{moveboxesdown} above, we can choose an arrangement $\mathcal{B}$ of $\mu$ from $\lambda$ in which the only boxes which move down are unlocked boxes of $\lambda$. But $\mathcal{S}\lambda$ requires all unlocked boxes of $\lambda$ to be moved down as far as they can, so $\mathcal{S}\lambda \leq \mu$. For the second statement, assume some box of $\mathcal{S} \lambda$ is unlocked. Then we can apply $\mathcal{S}$ again, to obtain $\mathcal{S}^2\lambda$ which is smaller than $\mathcal{S} \lambda$, which contradicts the first 
statement of this corollary. 
\end{proof}

\begin{lemma}\label{lockedremove}
Let $\lambda$ be a partition whose entire first row is locked, and let $\mu$ be the partition $(\lambda_2, \lambda_3, \dots )$. Then a box $(a,b)$ with $(a>1)$ is locked in $\lambda$ if and only if $(a-1,b)$ is locked in $\mu$. Similarly, suppose that $\lambda$ has unlocked boxes in its last column. Let $\mu$ be a partition obtained by removing any number of the boxes in that column (in such a way that $\mu$ is actually a partition). Then boxes $(c,d) \in \mu$ are locked if and only if they are locked in $\lambda$.
\end{lemma}

\begin{proof}
For the first statement, it is enough to notice that the first row of the partition $\lambda$ just plays the same role as the top of the partition plays in the first locked box condition. For the second statement, it is enough to note that for any box $z$ in the last column of $\lambda$, there are no boxes to the right of $z$ or above $z$, in the same ladder as $z$, which are the two interests of the locked box conditions.
\end{proof}

\begin{proposition}\label{algo_yields_partition}
Let $\lambda$ be a partition. Then $\mathcal{S}\lambda$ is a partition.
\end{proposition}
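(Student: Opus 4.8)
The plan is to verify directly that the box set $\mathcal{S}\lambda$ satisfies the two closure conditions characterizing a Young diagram: whenever $(a,b)\in\mathcal{S}\lambda$ one has $(a-1,b)\in\mathcal{S}\lambda$ (for $a\ge 2$) and $(a,b-1)\in\mathcal{S}\lambda$ (for $b\ge 2$). First I would record the normal form of $\mathcal{S}\lambda$ on a single ladder. Iterating Lemma \ref{locked_up_ladder} shows that on each ladder every occupied box lying above a locked box is itself locked, so the occupied boxes of $\lambda$ on a ladder split into a block of unlocked boxes at the bottom and a block of locked boxes at the top. Since $\mathcal{S}_\lambda$ fixes the locked boxes and slides the unlocked ones as far down their ladder as possible while preserving their order, in $\mathcal{S}\lambda$ each ladder carries its locked boxes in their original positions together with its unlocked boxes packed into the lowest available positions, and the two blocks do not collide. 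It therefore suffices to test the closure conditions position by position, distinguishing boxes of $\mathcal{S}\lambda$ that are locked (did not move) from relocated unlocked boxes.

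The cases where the box being tested is locked are immediate from the two locking rules. If $(a,b)\in\mathcal{S}\lambda$ is a locked box, then $(a-1,b)$ is locked by Lemma \ref{locked_up} and hence present in $\mathcal{S}\lambda$, while $(a,b-1)$ is locked by the type II rule (a box to the left of a locked box in the same row is locked) and hence also present. Thus both closure conditions hold whenever the tested box is one that stayed put.

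The heart of the argument is the case of a relocated unlocked box. Suppose $(a,b)\in\mathcal{S}\lambda$ is an unlocked box that has descended into column $b$ of a ladder $\mathcal{L}$; because the unlocked boxes of $\mathcal{L}$ are packed into its lowest positions, all columns $1,\dots,b$ of $\mathcal{L}$ are then occupied in $\mathcal{S}\lambda$. For the vertical closure I must show that $(a-1,b)$, which lies on the neighbouring ladder $\mathcal{L}'$ whose box in each column sits one row above the corresponding box of $\mathcal{L}$, is occupied in $\mathcal{S}\lambda$. I would phrase this as a prefix-count comparison: for every $c$, the number of occupied columns $\le c$ of $\mathcal{L}$ in $\mathcal{S}\lambda$ should be at most the corresponding number for $\mathcal{L}'$. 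Two nesting facts feed into this — the shape of $\lambda$ forces the occupied columns of $\mathcal{L}$ to be contained in those of $\mathcal{L}'$, and Lemma \ref{locked_up} forces the locked columns of $\mathcal{L}$ to be contained in those of $\mathcal{L}'$ — but these alone do not control how the packing of the unlocked boxes interacts with the gaps on $\mathcal{L}'$. The decisive input is the defining clause of a type I locked box, namely that every unoccupied space below it on its ladder has an unoccupied space directly above it: contrapositively, the empty spaces of $\mathcal{L}$ into which unlocked boxes descend are governed by the occupancy of $\mathcal{L}'$, and this is what guarantees that the columns filled at the bottom of $\mathcal{L}$ are already covered on $\mathcal{L}'$, either by unlocked boxes or by locked boxes. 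Establishing this cross-ladder bookkeeping is the step I expect to be the main obstacle. The horizontal closure is handled by the same scheme applied to the ladder obtained by shifting $\mathcal{L}$ one column to the left, with the type II rule playing the role of the type I clause; because the locking definition treats ``above'' (type I) and ``to the left'' (type II) asymmetrically, this direction requires its own parallel argument rather than a formal transpose.
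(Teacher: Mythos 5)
Your proposal correctly reduces the problem to two closure conditions and correctly disposes of the easy case (a locked box has a locked box above it by Lemma \ref{locked_up} and a locked box to its left by the type II rule). But the remaining case --- showing that a relocated unlocked box never ends up below an empty position --- is the entire content of the proposition, and you do not prove it: you describe the prefix-count comparison between a ladder $\mathcal{L}$ and the neighbouring ladder $\mathcal{L}'$ that you would need, observe that the two nesting facts you have (occupied columns of $\mathcal{L}$ nest in those of $\mathcal{L}'$, locked columns of $\mathcal{L}$ nest in those of $\mathcal{L}'$) are not sufficient, and then state that the type I clause ``is what guarantees'' the required covering while explicitly flagging that establishing this is the main obstacle. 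That flagged step is a genuine gap, not a routine verification. The difficulty is real: after packing, the occupied positions of $\mathcal{L}$ are its locked positions together with the $u$ lowest positions not blocked by locked boxes (where $u$ is the number of unlocked boxes on $\mathcal{L}$), and comparing this set with the analogous set on $\mathcal{L}'$ requires controlling how the counts of unlocked boxes and the placement of locked boxes on the two ladders interact --- which is essentially the statement being proved, so asserting it is circular. Your treatment of the horizontal direction inherits the same gap, and the claim that it follows by ``the same scheme'' with type II in place of type I is also only a sketch.

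For contrast, the paper does not attempt this direct cross-ladder bookkeeping. It first reduces the horizontal closure to the vertical one by an iteration up a column, and then proves the vertical closure by induction on $|\lambda|$: it peels off either the top block of equal rows (when the corner box is locked) or a single corner box $x$, applies the inductive hypothesis to the smaller partition, and re-inserts $x$ into the lowest empty position of its ladder. The leverage that makes this work is Corollary \ref{smallestpartition} applied to the smaller partition --- every box of $\mathcal{S}\mu$ is locked --- which lets one run a chain of type II locks rightward to derive a contradiction if the re-inserted box were to land under an empty position. Some such induction (or an equivalent global invariant) is needed; to salvage your direct approach you would have to actually prove the prefix inequality, and I do not see how to do that without reintroducing the paper's inductive peeling.
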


\begin{proof}

Suppose there was a box $w$ in $\mathcal{S}\lambda$ which was locked in $\lambda$ but has an empty position either directly above it or  directly to the left of it. If this were the case then it would contradict $w$ being locked in $\lambda$, as an unlocked box to the left of $w$ is impossible due to the second locking rule and an unlocked box above $w$ would contradict Lemma \ref{locked_up}.

We let $\mathcal{S}_\lambda$ denote the assignment of boxes in $\lambda$ to boxes in $\mu$ which is followed via the application of $\mathcal{S}$. The proposition will imply that $\mathcal{S}_\lambda$ is in fact an arrangement of $\mathcal{S}\lambda$ from $\lambda$.

 There are two remaining possibilities we must rule out: That an unlocked box in $\lambda$ which was moved down via $\mathcal{S}_\lambda$ has an empty position directly to the left, or that it has an empty position directly above. If we show that the construction never leaves an empty position above a moved unlocked box, then 
as a consequence we can easily prove that we get no empty positions to the left of a moved box.
  
Suppose the construction never leaves an empty position above a moved box. If there was an empty position $x$ directly to the left of a moved box $\mathcal{S}_\lambda(y)$ then below $x$ there would be a box $z$ on $\mathfrak{L}\mathcal{S}_\lambda(y)$. If $z$ was not moved, then it must have been locked, which implies all of the boxes above it were locked, including the box which was in position $x$, contradicting that position $x$ is empty. Otherwise, $z$ moved, so our assumption implies that $z$ has no empty position above it. Applying this procedure again, we can determine that there must be a box directly above the box directly above $z$. Applying this procedure will eventually imply that there must be a box in position $x$. Therefore our goal is to show that moving down all unlocked boxes produces no box below an empty position.
 
  We prove this by induction on $n$, the number of boxes in $\lambda$. The $n = 1$ case is clear. We assume that if $\eta$ is a partition of $k<n$ then $\mathcal{S}\eta$ has no box below an empty position (and hence is a partition by the previous paragraph). 
   
 The inductive proof is broken into three cases. 

The first case is that box $(1, \lambda_1)$ is locked. Then all of the boxes in the first row are also locked. Let $\mu = \lambda \setminus \{\lambda_1 \}$.  A box $(a,b)$ is locked in $\mu$ if and only if the box $(a+1,b)$ is locked in $\lambda$ by Lemma \ref{lockedremove}. Since $|\mu| < |\lambda|$, $\mathcal{S} \mu$ has no box below an empty position. We append the first row back on top of $\mathcal{S} \mu$ to form $\mathcal{S} \lambda$. Hence $\mathcal{S} \lambda$ has no box below an empty position. 

Let $j$ be so that $\lambda_1 = \lambda_j \neq \lambda_{j+1}$.

The second case is when $j >1$. Let $x$ be the box $(j,\lambda_j)$ and let $y = (j-1, \lambda_{j-1})$. Let $\mu = \lambda \setminus \{x \}$. Let $\nu = \lambda \setminus \{x,y \}$. Note that boxes in $\mu$ (and $\nu$) are locked if and only if they are locked in $\lambda$ by Lemma \ref{lockedremove}. Since $|\mu| < |\lambda|$, we can form a partition $\mathcal{S}\mu$ by bringing down all unlocked boxes. Now we place $x$ into the lowest empty position on the ladder of $x$ in $\mathcal{S}\mu$. If the resulting configuration is not a partition, then from above we may assume that there is an empty position above where $x$ was placed. If this is the case, $\mathcal{S}_\lambda(y)$ is below $x$ and has a box $z$ below it (if it didn't then $x$ would move below $y$). But then in $\mathcal{S}\nu$ when we move down all of the boxes we would have $z$ below an empty position (where $y$ is in $\mu$). But $|\nu| < |\lambda|$, so $\mathcal{S}\nu$  does not have an empty position above a box.

Lastly, if $j = 1$ then we let $x = (1, \lambda_1)$. Since $x$ is unlocked, there is at least one empty position in $\mathfrak{L}x$ which has a box above it. Let $\mu = \lambda \setminus \{x \}$. Boxes are locked in $\mu$ if and only if they were locked in $\lambda$ by Lemma \ref{lockedremove}. Since the number of boxes on $\mathfrak{L}x$ was at most the number of boxes on the ladder directly above $x$ in $\lambda$ (this is because $x$ cannot be a type I lock), the number of boxes on $\mathfrak{L}x$ is strictly less than the number on the ladder above $x$ in $\mu$. Therefore, there exists an empty position on $\mathfrak{L}x$ directly below a box in $\mathcal{S} \mu$. We let $(a,b)$ be the lowest such empty position. If $(a,b)$ is the lowest empty position in $\mathcal{S} \mu$ on $\mathfrak{L}x$, then moving $x$ into that position will yield a partition which is obtained from moving $x$ into the lowest empty position in its ladder. If not, then there must be an empty position on $\mathfrak{L}x$ directly below an empty position, all below row $a$. Let $(c,d)$ be the highest such empty position on $\mathfrak{L}x$ below $(a,b)$.  Let $(m_0,d)$ be the lowest box in column $d$ 
($m_0$ is at least $c-\ell$, since there is a box in the position $(c-\ell, d+1)$). 
Let $(m_1, b)$ be the box in column $b$ in $\mathfrak{L}(m_0+1,d)$.
By Corollary \ref{smallestpartition}, all of the boxes of $\mathcal{S}\mu$ are locked.
But $(m_1,b)$ has the position $(m_0+1,d)$ in the same ladder, so if $(m_1,b)$ is locked then there is a box to 
the right of it, in position $(m_1, b+1)$. 
We continue by letting $(m_2, b+1)$ be the box in $\mathfrak{L}(m_0+1,d)$ in column $b+1$. 
Similarly, since this box is locked, there must be a box directly to the right of it which is locked. 
That box will be $(m_2, b+2)$. This process must eventually conclude at step $(m_k,b+k)$ where $b+k$ is at the
end of its row. But then the box $(m_k, b+k)$ must be unlocked, since it has no locked boxes to the right 
and the empty position $(m_0+1, d)$ in the same ladder below it. 
This contradicts all of the boxes of $\mathcal{S} \mu$ being locked.

\end{proof}

\begin{example}
Continuing from the example above $(\lambda = (6,5,4,3,1,1)$ and $\ell = 3)$, we move all of the unlocked boxes down to obtain the smallest partition in $\mathcal{RC} (\lambda)$, which is $\mathcal{S} \lambda = (3,3,2,2,2,2,2,1,1,1,1)$. The boxes labeled $L$ are the ones which were locked in $(6,5,4,3,1,1)$ (and did not move). 

\begin{center}
$\tableau{L&L&L\\
L&L&L\\
L&L\\
L&L\\
L & \mbox{} \\
L & \mbox{}\\
 \mbox{}& \mbox{}\\
 \mbox{}\\
 \mbox{}\\
 \mbox{}\\
 \mbox{}}
$ \end{center}

\end{example}

\begin{theorem}
$\mathcal{S} \lambda$ is the unique smallest partition in its regularization class with respect to dominance order. It can be characterized as being the unique partition (in its regularization class) which has all locked boxes.  
\end{theorem}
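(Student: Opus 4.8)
The plan is to assemble the statement directly from the three results just established, with no new combinatorics required. First I would apply Proposition \ref{algo_yields_partition}, which says that $\mathcal{S}\lambda$ is a partition for \emph{every} partition $\lambda$; in particular, for each fixed $n$ the hypothesis of Corollary \ref{smallestpartition} holds. Corollary \ref{smallestpartition} then delivers both halves of what we need at once: $\mathcal{S}\lambda \leq \mu$ for every $\mu$ in the regularization class of $\lambda$ (note $\mathcal{S}\lambda$ does lie in $\mathcal{RC}(\lambda)$, since moving boxes along ladders does not change the regularization), and every box of $\mathcal{S}\lambda$ is locked.

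For the first sentence of the theorem, I would observe that $\mathcal{S}\lambda$ is not merely a minimal element of $\mathcal{RC}(\lambda)$ but an actual minimum, because it lies weakly below every element. Consequently uniqueness is automatic: if $\nu \in \mathcal{RC}(\lambda)$ were also a smallest element, then $\nu \leq \mathcal{S}\lambda$ and $\mathcal{S}\lambda \leq \nu$, whence $\nu = \mathcal{S}\lambda$ by antisymmetry of the dominance order. This proves $\mathcal{S}\lambda$ is \emph{the} unique smallest partition in its regularization class.

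For the classification, the key elementary remark is that a partition $\mu$ satisfies $\mathcal{S}\mu = \mu$ if and only if all of its boxes are locked: if every box of $\mu$ is locked, then the arrangement $\mathcal{S}_\mu$ has no unlocked boxes available to move down, so $\mathcal{S}\mu = \mu$; and conversely $\mathcal{S}\lambda$ has all boxes locked by the previous paragraph. Now suppose $\mu \in \mathcal{RC}(\lambda)$ has all boxes locked. Then $\mathcal{RC}(\mu) = \mathcal{RC}(\lambda)$, so applying Corollary \ref{smallestpartition} to $\mu$ shows $\mathcal{S}\mu$ is the unique smallest partition of $\mathcal{RC}(\lambda)$, i.e.\ $\mathcal{S}\mu = \mathcal{S}\lambda$. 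But $\mathcal{S}\mu = \mu$ by the remark, so $\mu = \mathcal{S}\lambda$. Hence $\mathcal{S}\lambda$ is the unique partition in its regularization class all of whose boxes are locked.

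I do not expect any genuine obstacle here, since the substantive work was done in Proposition \ref{moveboxesdown} and Proposition \ref{algo_yields_partition}. The only points requiring care are the logical bookkeeping: distinguishing a \emph{minimal} element from a \emph{minimum} in the partial order so that uniqueness comes for free, and isolating the equivalence ``$\mathcal{S}\mu = \mu \iff$ all boxes of $\mu$ are locked,'' which is what reduces the classification statement to a one-line application of Corollary \ref{smallestpartition}.
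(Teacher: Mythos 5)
Your proposal is correct and follows exactly the route the paper takes: its proof of this theorem is a one-line citation of Proposition \ref{algo_yields_partition} and Corollary \ref{smallestpartition}. You simply make explicit the bookkeeping (minimum vs.\ minimal, and the equivalence between ``all boxes locked'' and being fixed by $\mathcal{S}$) that the paper leaves implicit.
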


\begin{proof}
This follows from Corollary \ref{smallestpartition} and Proposition \ref{algo_yields_partition}. 
\end{proof}

\subsection{The nodes of $ladd_\ell$ are smallest in dominance order}

The nodes of $ladd_\ell$ have been defined recursively by applying the operators $\widehat{f}_{i}$. We now give a simple description which determines when a partition is a node of $ladd_\ell$. 

\begin{proposition} Let $\lambda$ be a partition of $n$. Let $\mathcal{RC}(\lambda)$ be its regularization class. If $\lambda$ is a node of $ladd_\ell$ then $\lambda$ is the smallest partition in $\mathcal{RC}(\lambda)$ with respect to dominance order.
\end{proposition}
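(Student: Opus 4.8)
The plan is to prove this by induction on $n = |\lambda|$, tracking how the operators $\widehat{f}_i$ interact with the locked/unlocked structure. The base case $n=0$ is trivial. For the inductive step, suppose $\lambda$ is a node of $B(\Lambda_0)^L$ with $\lambda \neq \emptyset$; then $\lambda = \widehat{f}_i \mu$ for some node $\mu$ of $B(\Lambda_0)^L$ and some residue $i$, where the box added to $\mu$ to form $\lambda$ is the ladder cogood $i$-box. By induction, $\mu$ is the smallest partition in $\mathcal{RC}(\mu)$ in dominance order, so by the preceding theorem every box of $\mu$ is locked. The goal is to show that every box of $\lambda$ is locked, since then the classification theorem (``$\mathcal{S}\lambda$ is the unique partition in its regularization class with all locked boxes'') immediately gives that $\lambda = \mathcal{S}\lambda$ is smallest in $\mathcal{RC}(\lambda)$.

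So the key step is: \emph{adding the ladder cogood $i$-box $c = (a,b)$ to $\mu$ keeps all boxes locked.} First I would observe that the boxes of $\mu$, which were all locked, remain locked in $\lambda$: adding a box can only help the type I locking condition (it can fill in an unoccupied space below another box in a ladder, or provide a locked box directly above something), and type II locks are preserved since we only add boxes. The substantive point is that the new box $c$ itself is locked in $\lambda$. For this I would use the characterization of the ladder cogood box via the reduced ladder $i$-signature: $c$ is the box of the rightmost $+$ after all $-+$ cancellation. I would argue that if $c$ were unlocked, then (since it is not type I locked) the ladder of $c$ would have, in $\mu$, at most as many boxes as the ladder immediately above $c$; one then needs to locate a removable $i$-box or another addable $i$-box that would cancel the $+$ at $c$ in the ladder $i$-signature, contradicting that $c$ is the \emph{cogood} (rightmost surviving $+$) box. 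Concretely: an unlocked addable box $c$ sitting at the top of a deficient ladder forces an empty position somewhere lower in the ladder with a box above it, and reading the ladder $i$-signature top-to-bottom on that ladder, the $+$ at $c$ gets cancelled against a later sign (or there is a $+$ further right that survives), so $c$ cannot be the ladder cogood $i$-box.

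I expect the main obstacle to be exactly this last geometric argument — precisely matching the combinatorics of the reduced ladder $i$-signature (which reads ladders left to right, and top to bottom within a ladder) against the inductive hypothesis that all boxes of $\mu$ are locked. One has to handle carefully the interaction between the ladder containing $c$ and the ladder immediately to its left (the ``$(a-1,b)$ ladder''), much as in the proof of Proposition~\ref{algo_yields_partition}, Case when $j=1$: the fact that $c$ is addable to $\mu$ and all of $\mu$ is locked should force the ladder of $c$ to be ``full up to row $a$'' relative to the ladder above it, which is what type I locking of $c$ requires; if instead there is a gap, that gap produces a cancelling sign in the ladder $i$-signature below $c$, contradicting cogood-ness. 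Lemmas~\ref{locked_up} and~\ref{locked_up_ladder} will be the main tools for propagating locked/unlocked status between adjacent ladders, and Theorem~\ref{reg_prop} guarantees $\mathcal{R}\lambda$ is well-defined so that ``smallest in the regularization class'' makes sense. Once $c$ is shown locked, type II locking propagates it leftward along its row, completing the induction.
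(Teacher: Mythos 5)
Your overall strategy matches the paper's: induct on $n$, write $\lambda = \widehat{f}_i\mu$ with $\mu$ a node of $B(\Lambda_0)^L$, use the inductive hypothesis that every box of $\mu$ is locked, and show that every box of $\lambda$ is locked. Your treatment of the case where the newly added cogood box $c$ is itself unlocked is also in the spirit of the paper's argument: the paper pins down an empty position $x'$ lower on the ladder of $c$, verifies it is genuinely addable by checking the box above $x'$ and the box to the left of $x'$ are present (using that the position above $c$ is locked and the box to the left of $c$ is a type I lock), and then uses the absence of a $-$ between the two $+$'s to conclude the crystal rule would have preferred $x'$. Filling in exactly those details is the work you correctly anticipate.

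The genuine gap is your claim that ``adding a box can only help the type I locking condition,'' from which you conclude in one line that all boxes of $\mu$ remain locked in $\lambda$. This is false. A box $z$ of $\mu$ is type I locked partly because every unoccupied space in the ladder of $z$ lying below $z$ has an unoccupied space directly above it. The new box $c=(a,b)$ can occupy exactly such an ``above'' space: if $(a+1,b)$ is an unoccupied space on the ladder of some box $z$ of $\mu$ situated above row $a$ (so $z$ lies on the ladder immediately below that of $c$), then after inserting $c$ the space $(a+1,b)$ no longer has an unoccupied space directly above it, and $z$ can lose its type I lock --- and with it any type II locks along its row that depended on $z$. The paper devotes the entire second case of its proof to ruling this out: it shows such a $z$ would have to sit at the end of its row, so the space $w$ immediately to its right is an addable $i$-box on a ladder strictly to the right of the ladder of $c$, and the $+$ at $w$ cannot be cancelled (there is no removable $i$-box below $c$ on $c$'s ladder, since otherwise $z$ would already have been unlocked in $\mu$); hence $\widehat{f}_i$ would have added at $w$ rather than at $c$, a contradiction. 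Without an argument of this kind your induction does not close, because the conclusion ``all boxes of $\lambda$ are locked'' requires both that $c$ is locked and that no box of $\mu$ becomes unlocked.
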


\begin{proof} 
The proof is by induction on the size of a partition. Suppose we have a partition $\mu$ in $ladd_\ell$ which is smallest in its regularization class. Equivalently, all of the boxes of $\mu$ are locked. We want to show that $\lambda = \widehat{f_i} \mu$ is still smallest in its regularization class. Let $x = \lambda \setminus \mu$, so that $\lambda$ is just $\mu$ with the addable $i$-box $x$ inserted. There are two cases to consider.

The first is that the insertion of $x$ into $\mu$ makes $x$ into an unlocked box. Since the position $y$ above $x$ must be locked (or $x$ is in the first row), there must exist an empty position $x'$ in $\mathfrak{L}x$ which has a box $y'$ directly above $x'$ in the ladder of $y$. Also, the box $z$ to the left of $x$ must be a type I lock, since $x$ was not in $\mu$ (implying $z$ could not be type II). But then the position $z'$ to the left of $x'$ must have a box, since there is a box in the position above $z'$ (the box to the left of $y'$). Since $\mu$ was smallest in dominance order, it could not have had a $+$ below a $-$ on the same ladder (otherwise the $-$ could be moved down to the $+$, contradicting that $\mu$ was smallest in dominance order). Hence there is no $-$ between the two $+'s$ in positions $x$ and $x'$ of $\mu$. This means that the crystal rule would have chosen to add to $x'$ instead of $x$, a contradiction.  

The second case is that the insertion of $x$ into $\mu$ unlocks a box. If this is the case then let $y$ be the position above $x$. The only case to consider is adding the box $x$ unlocks some box above the row of $x$, on the ladder directly below $x$. Let $z$ denote the lowest such box. If there is a box $x'$ directly above $z$ and no box directly to the right of $x'$ then $x'$ would have been unlocked in $\mu$, since the box $y$ is in $\mu$ but $x$ is not. This contradicts the assumption that $\mu$ was the smallest. 

The only other possibility is that the box $x'$ above $z$  has a box directly to the right of it (this still works even if $z$ is in the first row of the partition).  If there was a box to the right of $z$, then that box would be locked since it was locked in $\mu$. But then $z$ would be locked independently of the addition of $x$. So $z$ is at the end of its row. The position one box to the right of $z$ (let us name it $w$) must therefore be empty and have the same residue as $x$. It is an addable box, so it will contribute a $+$ to the $i$-ladder-signature. There are no $-$ boxes on $\mathfrak{L}x$ below $x$, because if there was a removable $i$-box on $\mathfrak{L}x$ below $x$ then the box $z$ would have been unlocked in $\mu$. Therefore,  no $-$ will cancel the $+$ from position $w$. Since $w$ is on the ladder past $\mathfrak{L}x$, this yields a contradiction as $\widehat{f_i}$ should have added a box to position $w$.

\end{proof}

One can view $reg_\ell$ as having nodes $\{ \mathcal{RC}(\lambda) : \lambda \vdash n, n\geq 0 \}$. 
The $reg_\ell$ model takes the representative $\mathcal{R} \lambda \in \mathcal{RC}(\lambda)$, which happens to be the largest in dominance order. Here, we will take a different representative of $\mathcal{RC}(\lambda)$, the partitions $\mathcal{S} \lambda$, which are smallest in dominance order. One must then give a description of the edges of the crystal graph. We will show in Section \ref{reg_and_crystal} that the crystal operators $\widehat{f}_i$ and $\widehat{e}_i$ are the correct operators to generate the edges of the graph. In other words, we will show that the crystal $ladd_\ell$ constructed above is indeed isomorphic to $reg_\ell$.

\section{Reinterpreting the crystal rule of $reg_\ell$}\label{crystal_lemmas}

In this short section we prove some lemmas necessary for our main theorem (that the crystals $reg_\ell$ and $ladd_\ell$ are isomorphic). We also reinterpret the crystal rule on the classical $reg_\ell$ in terms of the new crystal rule.

\subsection{Two lemmas needed for crystal isomorphism}
\begin{lemma}\label{nodes}
Suppose $\lambda \in ladd_\ell$ and  
 $(a,b)$ and $(c,d)$ are boxes on the same ladder 
such that $(a,b)$ is removable and $(c,d)$ is addable. Then $a < c$.
\end{lemma}

\begin{proof}
If there was a $-$ box above an empty $+$ box on the same ladder, then one could move that $-$ box 
down to the $+$ position to form a new partition in the same regularization class. But this cannot happen
 since the nodes of $ladd_\ell$ are smallest in dominance order.
\end{proof}

\begin{lemma}\label{ell_regular_nodes}
Suppose $\lambda \in ladd_\ell$ and  
 $(a,b)$ and $(c,d)$ are boxes on the same ladder 
such that $(a,b)$ is addable and $(c,d)$ is removable. Then $a < c$.
\end{lemma}
\begin{proof}
Since $\lambda$ is in $reg_\ell$, it is $\ell$-regular. Suppose such positions $(a,b)$ and $(c,d)$ exist. Then the rim hook starting at box $(a, b-1)$ and following the border of $\lambda$ down to the $(c,d)$ box will cover exactly $(b-d) \ell$ boxes. However, this border runs over only $b-d$ columns, so there must exist a column which contains at least $\ell$ boxes at the ends of their rows. This contradicts $\lambda$ being $\ell$-regular. 
\end{proof}

\subsection{Reinterpreting the classical crystal rule}

The next theorem proves that the classical crystal rule can be interpreted in terms of ladders. In fact 
the only difference between the classical rule and the ladder crystal rule is that ladders 
are read bottom to top instead of top to bottom.

\begin{theorem}\label{regular_rule}
The $i$-signature (and hence reduced $i$-signature) of an $\ell$-regular partition $\lambda$ in 
$reg_\ell$ can be determined by reading from its leftmost ladder to rightmost ladder, reading each ladder from bottom to top.
\end{theorem}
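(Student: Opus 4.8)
The plan is to compare the ordinary $i$-signature of an $\ell$-regular partition $\lambda$, which is read from the bottom-left box of the Young diagram to the top-right box, with the word obtained by reading the addable and removable $i$-boxes ladder-by-ladder, from the leftmost $i$-ladder to the rightmost, bottom-to-top within each ladder. I would first observe that both words record exactly the same multiset of $+$'s and $-$'s, namely one symbol for each addable $i$-box ($+$) and each removable $i$-box ($-$) of $\lambda$; only their relative order can differ. So it suffices to show that the two orderings produce the same reduced word after cancelling $-+$ pairs, and in fact I will argue that the two words are literally equal.

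The key structural input is Lemma~\ref{ell_regular_nodes}: on any single $i$-ladder of an $\ell$-regular partition there is no $+$ above a $-$. Hence within each fixed ladder, reading bottom-to-top, all the $-$'s come first (lower down) and all the $+$'s come after (higher up) — wait, more precisely, within one ladder reading bottom to top we see all removable boxes before all addable boxes. But a ladder of an $\ell$-regular partition can contain at most one box that is removable or addable in the relevant ``boundary'' sense... Actually the cleaner route: I would show that reading bottom-left to top-right in the Young diagram visits the ladders in left-to-right order, and visits the boxes of a single ladder in bottom-to-top order. This is because boxes on an $i$-ladder satisfy $\frac{c-a}{d-b}=\ell-1$, so moving up a ladder means decreasing the column index $d$; and the bottom-left-to-top-right reading order of the diagram is: lowest row first, and within comparable positions, leftmost column first. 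Two boxes on the same ladder with the lower one in a larger row and smaller column are therefore read in the order ``lower box (= bottom of ladder) first'', matching the within-ladder order of the ladder reading. And a box on a ladder to the left sits (for the relevant addable/removable boxes on the rim) in a lower row / leftmost position relative to a box on a ladder to its right, so it is read first — giving the left-to-right ladder order.

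The main obstacle I anticipate is making the ``reading order'' comparison rigorous for addable boxes, which are not boxes of $\lambda$ but lie just outside the rim, and ensuring that the interleaving of addable and removable boxes across different ladders really does agree between the two reading orders (not merely that each reading respects its own ladder's internal order). I would handle this by working along the rim (border path) of $\lambda$: the removable $i$-boxes and addable $i$-boxes each correspond to specific convex/concave corners of the border, and one checks that traversing the border from bottom-left to top-right encounters these corners in an order that is consistent with sorting first by ladder (left to right) and then within a ladder (bottom to top). The no-$+$-above-$-$ property from Lemma~\ref{ell_regular_nodes} is exactly what guarantees there is no ``crossing'' that would desynchronize the two orders. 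Once the two signatures are shown to agree as words, the statement about reduced $i$-signatures is immediate, and consequently the classical crystal operators $\widetilde{e}_i,\widetilde{f}_i$ on $B(\Lambda_0)$ can be computed by the ladder reading, differing from the rule for $B(\Lambda_0)^L$ only in that ladders are read bottom-to-top rather than top-to-bottom.
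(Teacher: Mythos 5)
Your overall strategy---showing that the bottom-to-top row reading and the left-to-right, bottom-to-top ladder reading produce literally the same word---is the same as the paper's. The within-ladder part of your argument is essentially fine (modulo a sign slip: moving \emph{up} a ladder \emph{increases} the column index, since a ladder descends $\ell-1$ rows per column step to the left; you state the opposite and then contradict it, and you also state the consequence of Lemma~\ref{ell_regular_nodes} backwards). The genuine gap is in the cross-ladder comparison, which is the only place $\ell$-regularity is actually used. You assert that a marked $i$-box on a ladder further to the left lies in a lower row than one on a ladder further to the right, and you name Lemma~\ref{ell_regular_nodes} as the key input guaranteeing ``no crossing.'' But that lemma only constrains two marked boxes on the \emph{same} ladder; it says nothing about relative rows of marked boxes on \emph{different} ladders, which is exactly what is at stake. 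Concretely, take $\ell=3$ and $\lambda=(2,1,1,1,1,1,1)$: the addable $0$-box $(2,2)$ lies on the $4^{th}$ ladder and the removable $0$-box $(7,1)$ on the $7^{th}$, so the ladder reading gives $+-$ while the bottom-to-top reading gives $-+$, and the reduced signatures even differ. This partition satisfies the conclusion of Lemma~\ref{ell_regular_nodes} vacuously (no ladder carries both a $+$ and a $-$), so that lemma cannot be what forces the two orders to agree; what fails here is $\ell$-regularity itself.

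The statement you need, and which the paper attributes (tersely) to ``the regularity of $\lambda$,'' is: if $x=(a,b)$ and $y=(c,d)$ are addable or removable $i$-boxes with $a>c$, then $a-c\le(\ell-1)(d-b)$, i.e.\ the lower box lies on a weakly more-left ladder. A correct argument runs as follows: since both positions sit on the rim, $d-b\ge-1$; since both have residue $i$, $(a-c)+(d-b)\equiv 0\pmod{\ell}$, and one checks it is positive, hence at least $\ell$. Then the rows between $c$ and $a$, after discarding one row at each end according to whether $x$ and $y$ are addable or removable, all have part sizes confined to the $d-b$ values in the interval $[b,d-1]$; $\ell$-regularity caps each value at $\ell-1$ occurrences, which yields $a-c\le(\ell-1)(d-b)$ in each of the four addable/removable cases (in the case $x$ addable, $y$ removable one gets $a-c\le(\ell-1)(d-b)+1$ and must invoke the residue congruence to exclude equality there). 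Without some count of this kind your proposal does not close, since the step it leaves unproved is precisely the content of the theorem.
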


\begin{proof}
If positions $x$ and $y$ contain $+'s$ (or $-'s$) of the same residue with the ladder 
of $x$ to the left of the ladder of $y$, then by the regularity of $\lambda$, $x$ will 
be in a row below $y$. Hence reading the $i$-signature up ladders from left to right is 
equivalent to reading up the rows of $\lambda$ from bottom to top.

\end{proof}

\begin{example}
$\lambda = (6,5,3,3,2,2,1)$ and $\ell = 3$. Suppose we wanted to find the 2-signature for $\lambda$. 
Then we could read from leftmost ladder (in this case, the leftmost ladder relevant to the 2-signature
 is the one which contains position (8,1)) to rightmost ladder. Inside each ladder we read from bottom 
to top . The picture below shows the positions which 
correspond to addable and removable 2-boxes, with their ladders. The 2-signature is $+---$.

\begin{center}
$\begin{array}{cc}
\tableau{\mbox{}&\mbox{}&\mbox{}&\mbox{}&\mbox{}&-\\
\mbox{}&\mbox{}&\mbox{}&\mbox{}&\mbox{}\\
\mbox{}&\mbox{}&\mbox{}\\
\mbox{}&\mbox{}&-\\
\mbox{}&\mbox{}\\
\mbox{}&-\\
\mbox{}}
&

\put(-112,-122){$+$}

\put (-112,-122){\line(1,2){70}}
\put (-85,-122){\line(1,2){70}}

\end{array}
$
\end{center}
\end{example}

\section{Regularization and Crystal Isomorphism}\label{reg_and_crystal}
The results of this section come from ideas originally sketched out with Steve Pon in the summer of 2007.
\subsection{Crystal isomorphism}\label{crystal_isomorphism}
We will now prove that our crystal $ladd_\ell$ from Section \ref{new_crystal} is indeed isomorphic 
to the crystal $reg_\ell$.
\begin{remark}
In the following definitions, any positions $(i,j)$ for either of $i,j = 0$ are assumed to be \textit{in} the diagram for the partition $\lambda$. 
\end{remark}

\begin{definition}
The $k^{th}$ ladder of $\lambda$ will refer to all of the positions $(i,j)$ with $i,j \geq 0$ of $\lambda$ which are on $\mathfrak{L}(k,1)$. $|\mathfrak{L}(k,1)|$ will denote the number of positions of this ladder which are in $\lambda$ (so $|\mathfrak{L}(k,1)|$ depends on the partition $\lambda$).
\end{definition}

\begin{definition}
We let $\mathcal{A}_k (\lambda)$ denote the boxes $(i,j)$ ($i,j \geq 0$) on the $k^{th}$ ladder which have boxes $(i,j+1)$ and $(i+1,j)$ in $\lambda$, but $(i+1,j+1)$ not in $\lambda$. Similarly, we let $\mathcal{B}_k(\lambda)$ denote the boxes $(i,j)$ on the $k^{th}$ ladder which do not have boxes $(i,j+1)$ and $(i+1,j)$ in $\lambda$.
\end{definition}

\begin{remark}
Note that the boxes in $\mathcal{B}_k(\lambda)$ are exactly those boxes on the $k^{th}$ ladder which are removable in $\lambda$ and that the boxes $(i,j)$ in $\mathcal{A}_k(\lambda)$ are exactly those boxes for which there is an addable position in $(i+1,j+1)$ on the $(k+\ell)^{th}$ ladder.
\end{remark}

\begin{lemma}\label{b-a}
Let $\lambda$ and $\mu$ be two partitions in the same $\ell$ regularization class. Then $|\mathcal{B}_k(\lambda)| - |\mathcal{A}_k(\lambda)| = |\mathcal{B}_k(\mu)| - |\mathcal{A}_k(\mu)|$. In other words, $|\mathcal{B}_k| - |\mathcal{A}_k|$ is an invariant of a regularization class.
\end{lemma}

\begin{proof}
The number of boxes in any ladder of a partition is clearly an invariant of a regularization class (in fact this can be the definition of a regularization class). We will count all of the boxes in the $k^{th}$ ladder. Each box in the $k^{th}$ ladder falls into at least one of three different categories. Either it has a box directly to the right of it, a box directly below it, or neither. Boxes with neither are counted by $|\mathcal{B}_k(\lambda)|$. Boxes with a box below are counted in $|\mathcal{L}(k+1,1)|$. Boxes with a box to the right are counted in $|\mathcal{L}(k+\ell-1,1)|$. But we have over-counted as some boxes can have a box both directly below and to the right. To fix this, we must subtract by  $|\mathcal{L}(k+\ell,1)|$ (those boxes $(i,j)$ which also have a box $(i+1,j+1)$ in $\lambda$) and $|\mathcal{A}_k(\lambda)|$ (those boxes which do not have $(i+1,j+1)$ in $\lambda$). Hence:
$$ |\mathcal{L}(k,1)| =  |\mathcal{L}(k+1,1)| +  |\mathcal{L}(k+\ell-1,1)| +  |\mathcal{B}_k(\lambda)| -  |\mathcal{L}(k+\ell,1)| -  |\mathcal{A}_k(\lambda)|.$$

The lemma follows, as all terms other than $|\mathcal{B}_k(\lambda)| - |\mathcal{A}_k(\lambda)|$ are invariants of the regularization class.
\end{proof}

\begin{example}
Let $\lambda = (3,2,1)$ and $\ell = 3$. There are $4$ partitions in the regularization class of $\lambda$: $(3,2,1), (3,1,1,1), (2,2,2)$ and $(2,1,1,1,1)$. Let $k = 4$. There are $4$ total positions in the ladder containing $(4,1)$ (they are $(0,3), (2,2), (4,1)$ and $(6,0)$). Then $\mathcal{L}(4,1) = 3$, $\mathcal{L}(5,1) = 2$, $\mathcal{L}(6,1) = 2$ and $\mathcal{L}(7,1) = 1$.  $\mathcal{A}_4((3,2,1)) = \{ (0,3)\}$, $\mathcal{B}_4((3,2,1)) = \{(2,2) \}$, $\mathcal{A}_4((3,1,1,1)) = \{(0,3) \}$, $\mathcal{B}_4((3,1,1,1)) = \{(4,1) \}$ and $\mathcal{A}_4$, $\mathcal{B}_4$ are empty for the other two partitions. $|\mathcal{B}_4| - |\mathcal{A}_4| = 0$ for all $4$ partitions. Also, $ \mathcal{L}(4,1)=3= 2+2-1+0 =\mathcal{L}(5,1) + \mathcal{L}(6,1) - \mathcal{L}(7,1) + ( |\mathcal{B}_4| - |\mathcal{A}_4|)$, as was shown in the proof of Lemma \ref{b-a}.

$$\begin{array}{cccc} \tableau{\mbox{}&\mbox{}&\mbox{}\\\mbox{}&\mbox{}\\\mbox{}} & \tableau{\mbox{}&\mbox{}&\mbox{}\\\mbox{}\\\mbox{}\\\mbox{}}&\tableau{\mbox{}&\mbox{}\\\mbox{}&\mbox{}\\\mbox{}&\mbox{}} & \tableau{\mbox{}&\mbox{}\\\mbox{}\\\mbox{}\\\mbox{}\\\mbox{}}\end{array}$$
\end{example}

\begin{lemma}\label{ladderlemma}  Let $\lambda$ be a node of $ladd_\ell$. Then the number of ladder-(co)normal boxes on the $k^{th}$ ladder of $\lambda$ is the same as the number of (co)normal boxes on the $k^{th}$ ladder of $\mathcal{R}\lambda$. In particular, a ladder-good (ladder-cogood) $i$-box of $\lambda$ lies on
 the same ladder as the good (cogood) $i$-box of $\mathcal{R} \lambda$.
\end{lemma}

\begin{proof}

 Lemma \ref{nodes} implies that there is no cancelation in the ladder $i$-signature on a ladder of $\lambda$.
Similarly, Lemma \ref{ell_regular_nodes} implies that there is no cancelation in the $i$-signature on a ladder of $\mathcal{R}\lambda$. This allows us to calculate the ladder $i$-signature
 (resp. $i$-signature) of $\lambda$ (resp. $\mathcal{R}\lambda$) by counting the number of addable and removable 
$i$-boxes in each ladder.

First, we start with two adjacent ladders, $\mathcal{L}(k,1)$ and $\mathcal{L}(k+\ell,1)$. The difference between the number of $-$'s which contribute from $\mathcal{L}(k,1)$ and the number of $+$'s which contribute from $\mathcal{L}(k+\ell,1)$ is an invariant, $\alpha_k$, by Lemma \ref{b-a}. If $\alpha_k$ is positive, then after cancellation between these two ladders, there are exactly $\alpha_k$ $-$'s remaining in ladder $\mathcal{L}(k,1)$. If $\alpha_k$ is negative, then there are $-\alpha_k$ $+$'s remaining in ladder $\mathcal{L}(k+\ell,1)$. This is independent of calculating the ladder $i$-signature of $\lambda$, or the $i$-signature of $\mathcal{R}\lambda$. Continuing this process between all ladders, we see that the number of uncanceled $+$ and $-$ signs on each ladder is an invariant, and the lemma follows.

\end{proof}

\begin{corollary}
Let $\lambda$ be a node in $ladd_\ell$. Then $\widehat{\varphi}_i(\lambda) = \varphi_i(\mathcal{R}\lambda)$ and $\widehat{\varepsilon}_i(\lambda) = \varepsilon_i(\mathcal{R}\lambda)$.
\end{corollary}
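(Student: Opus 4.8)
The plan is to obtain this immediately by summing the ladder-by-ladder equality of Lemma \ref{ladderlemma} over all ladders. Recall that, by definition, $\widehat{\varphi}_i(\lambda)$ is the total number of ladder-conormal $i$-boxes of $\lambda$ and $\widehat{\varepsilon}_i(\lambda)$ the total number of ladder-normal $i$-boxes; likewise $\varphi_i(\mathcal{R}\lambda)$ and $\varepsilon_i(\mathcal{R}\lambda)$ count the conormal and normal $i$-boxes of $\mathcal{R}\lambda$. Since every addable or removable $i$-box lies on some $i$-ladder, each of these four quantities decomposes as a sum of its contributions from the individual $i$-ladders.

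First I would record the bookkeeping needed to make the summation legitimate. The $k^{th}$ ladder has residue $-k+1 \bmod \ell$, a quantity depending only on $k$, so the $k^{th}$ ladder of $\lambda$ and the $k^{th}$ ladder of $\mathcal{R}\lambda$ always carry the same residue; moreover regularization only moves boxes within their own ladders, so for each $k$ the $k^{th}$ ladder of $\lambda$ is paired with the $k^{th}$ ladder of $\mathcal{R}\lambda$ and only finitely many ladders are nonempty. Hence it makes sense to write $\widehat{\varphi}_i(\lambda) = \sum_{k \equiv 1-i} (\text{ladder-conormal }i\text{-boxes on the }k^{th}\text{ ladder of }\lambda)$ and $\varphi_i(\mathcal{R}\lambda) = \sum_{k \equiv 1-i} (\text{conormal }i\text{-boxes on the }k^{th}\text{ ladder of }\mathcal{R}\lambda)$, with the analogous expressions for the $\varepsilon$'s.

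Next I would apply Lemma \ref{ladderlemma} term by term: for each $k$ with the $k^{th}$ ladder of residue $i$, the number of ladder-normal $i$-boxes on that ladder of $\lambda$ equals the number of normal $i$-boxes on the $k^{th}$ ladder of $\mathcal{R}\lambda$, and similarly for the (ladder-)conormal count. Summing over $k$ gives $\widehat{\varepsilon}_i(\lambda) = \varepsilon_i(\mathcal{R}\lambda)$ and $\widehat{\varphi}_i(\lambda) = \varphi_i(\mathcal{R}\lambda)$, which is exactly the claim.

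I do not expect a genuine obstacle here: Lemma \ref{ladderlemma} carries the entire combinatorial content, and the corollary is purely the statement that a local equality of counts on every ladder forces equality of the global totals. The only point requiring a word of care is the one flagged above, namely that the ladders of $\lambda$ and of $\mathcal{R}\lambda$ are matched up correctly (same index, same residue, finitely many nonempty), which is immediate from the definition of regularization.
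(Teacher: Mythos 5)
Your proof is correct and is exactly the argument the paper intends: the corollary is stated without proof as an immediate consequence of Lemma \ref{ladderlemma}, obtained by summing the ladder-by-ladder equality of (co)normal box counts over all $i$-ladders. The bookkeeping you add (matching the $k^{th}$ ladders of $\lambda$ and $\mathcal{R}\lambda$, same residue, regularization preserving ladders) is the right justification for that summation.
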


\begin{theorem}\label{crystal_commutes}
Regularization commutes with the crystal operators. In other words:

\begin{enumerate}
\item $\mathcal{R} \circ \widehat{f}_i = \tilde{f}_i \circ \mathcal{R}$,
\item $\mathcal{R} \circ \widehat{e}_i = \tilde{e}_i \circ \mathcal{R}$.
\end{enumerate}

\end{theorem}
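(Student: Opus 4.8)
The plan is to deduce this from Lemma~\ref{ladderlemma} together with Lemma~\ref{e_inverse_f} (and its classical analogue), handling the two statements symmetrically. It suffices to prove statement~(1); statement~(2) then follows by the same argument, or by observing that $\widehat{e}_i$ and $\tilde{e}_i$ are the inverses (where defined) of $\widehat{f}_i$ and $\tilde{f}_i$ respectively, so commuting with one forces commuting with the other. So fix a node $\lambda$ of $B(\Lambda_0)^L$ and a residue $i$.

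First I would dispose of the degenerate case: if $\widehat{f}_i\lambda = 0$, then $\lambda$ has no ladder-cogood $i$-box, so by Lemma~\ref{ladderlemma} the number of conormal $i$-boxes on every ladder of $\mathcal{R}\lambda$ is zero, hence $\mathcal{R}\lambda$ has no cogood $i$-box and $\tilde{f}_i(\mathcal{R}\lambda) = 0$; thus both sides are $0$. Now assume $\widehat{f}_i\lambda \neq 0$. Let $x$ be the ladder-cogood $i$-box of $\lambda$, so $\widehat{f}_i\lambda = \lambda \cup \{x\}$, and let $\mathcal{L}$ be the ladder containing $x$, say the $k^{th}$ ladder. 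By Lemma~\ref{ladderlemma}, the cogood $i$-box $x'$ of $\mathcal{R}\lambda$ lies on the $k^{th}$ ladder $\mathcal{K}$ of $\mathcal{R}\lambda$, and $\tilde{f}_i(\mathcal{R}\lambda) = \mathcal{R}\lambda \cup \{x'\}$. The goal is therefore to show
$$\mathcal{R}(\lambda \cup \{x\}) = (\mathcal{R}\lambda) \cup \{x'\}.$$

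The key point is that regularization acts ladder by ladder: $\mathcal{R}$ moves the boxes of $\lambda$ up within each ladder independently, so $\mathcal{R}(\lambda\cup\{x\})$ and $\mathcal{R}\lambda$ agree outside the $k^{th}$ ladder, and on the $k^{th}$ ladder $\mathcal{R}(\lambda\cup\{x\})$ is obtained by stacking $n_{\mathcal{L}}+1$ boxes at the top (where $n_{\mathcal{L}}$ is the number of boxes of $\lambda$ on $\mathcal{L}$), while $(\mathcal{R}\lambda)\cup\{x'\}$ is obtained by taking the $n_{\mathcal{L}}$ top-stacked boxes of $\mathcal{K}$ and adding one more box $x'$. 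So it remains only to check that adding the box $x'$ to $\mathcal{R}\lambda$ indeed extends the top-stacked run on $\mathcal{K}$ by one (equivalently, that $x'$ sits immediately below the lowest box of $\mathcal{K}$ in $\mathcal{R}\lambda$, or at the very top if $\mathcal{K}$ is empty) \emph{and} that the result is a genuine Young diagram. Both follow from the fact that $x'$ is an addable $i$-box of the $\ell$-regular partition $\mathcal{R}\lambda$: by Lemma~\ref{ell_regular_nodes}, on $\mathcal{K}$ all addable $i$-boxes lie below all removable ones, and addable boxes of an $\ell$-regular partition on a single ladder are exactly the positions just below the top-stacked occupied run; adding the cogood one is precisely extending that run. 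I would then record that this description of $\mathcal{R}(\lambda\cup\{x\})$ on ladder $\mathcal{K}$ coincides with $(\mathcal{R}\lambda)\cup\{x'\}$, and since the two partitions agree on every other ladder, they are equal.

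The main obstacle I anticipate is the bookkeeping in the previous paragraph: making precise, without hand-waving, that ``$\mathcal{R}$ commutes with adding the cogood box'' requires carefully comparing, ladder by ladder, how the single new box $x$ propagates under the top-justification defining $\mathcal{R}$, and in particular ruling out the possibility that inserting $x$ into $\lambda$ changes which ladder is ``$k^{th}$'' or disturbs the count $n_\alpha^k$ of box-types on neighboring ladders in a way that breaks the matching from Lemma~\ref{ladderlemma}. The clean way to handle this is to note that $x$ is the \emph{cogood} box, so by Lemma~\ref{nodes} there is no removable $i$-box below $x$ on $\mathcal{L}$, hence $x$ lies at (or adjacent to) the bottom of the occupied boxes on $\mathcal{L}$ and its addition changes the type-counts $n_\alpha^\bullet$ only in the controlled way already analyzed in the proof of Lemma~\ref{ladderlemma}; everything else is then forced. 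Once that is in place, statement~(2) is immediate by applying statement~(1) to $\widehat{e}_i\lambda$ (a node of $B(\Lambda_0)^L$ by construction) and using Lemma~\ref{e_inverse_f}.
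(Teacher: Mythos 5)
Your proposal takes essentially the same route as the paper's: the paper's entire proof is the observation that, by Lemma~\ref{ladderlemma}, $\widehat{f}_i$ adds a box to the same ladder of $\lambda$ that $\tilde{f}_i$ adds to $\mathcal{R}\lambda$, and since regularization depends only on how many boxes sit on each ladder, the two sides agree; you have simply written out the ladder-by-ladder bookkeeping that the paper leaves implicit, and your derivation of (2) from (1) via Lemma~\ref{e_inverse_f} matches the paper's ``follows similarly.'' One small correction: your parenthetical claim that the addable $i$-boxes of an $\ell$-regular partition lying on a single ladder are exactly the positions just below the top-stacked occupied run is false --- for $\ell=3$ the $3$-regular partition $(2,2,1,1)$ has three addable $2$-boxes $(1,3)$, $(3,2)$, $(5,1)$, all on the ladder through $(5,1)$, which carries no box of the partition at all. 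What you actually need is only that adding the \emph{cogood} box $x'$ leaves $\mathcal{R}\lambda$ $\ell$-regular (the standard fact that $\tilde{f}_i$ preserves the node set of $B(\Lambda_0)$); top-justification of $\tilde{f}_i\mathcal{R}\lambda$ on the ladder $\mathcal{K}$ then forces $x'$ to be the position immediately extending the occupied run, and the rest of your argument goes through unchanged.
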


\begin{proof}
(1) will follow from Lemma \ref{ladderlemma}, since applying an $\widehat{f}_i$ to a partition $\lambda$ will place an $i$-box in the same ladder of $\lambda$ as $\tilde{f}_i$ places in $\mathcal{R} \lambda$. (2) follows similarly.
\end{proof}

\begin{corollary}\label{crystalisisom}
The crystal $reg_\ell$ is isomorphic to $ladd_\ell$.
\end{corollary}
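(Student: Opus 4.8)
The plan is to assemble the corollary from the pieces already established: Theorem \ref{crystal_commutes} (regularization intertwines $\widehat{f}_i,\widehat{e}_i$ with $\tilde f_i,\tilde e_i$), Theorem \ref{reg_prop} (properties of $\mathcal{R}$), and the structural facts that both $B(\Lambda_0)$ and $B(\Lambda_0)^L$ are generated from the single highest weight node $\emptyset$ by the lowering operators. The first step is to recall that $B(\Lambda_0)^L$ is by construction the connected component of $\emptyset$ under the operators $\widehat{f}_i$, and that $B(\Lambda_0)$ is likewise the connected component of $\emptyset$ under the $\tilde f_i$ on $\ell$-regular partitions. So $\mathcal{R}$ restricted to the nodes of $B(\Lambda_0)^L$ is the candidate isomorphism; I must show it is a bijection onto the nodes of $B(\Lambda_0)$ that commutes with all crystal data.

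Next I would check that $\mathcal{R}$ maps nodes of $B(\Lambda_0)^L$ to nodes of $B(\Lambda_0)$ surjectively. Surjectivity onto nodes and commutation with edges both follow by induction on $n=|\lambda|$ using Theorem \ref{crystal_commutes}: the empty partition maps to the empty partition, and if $\mu = \widehat{f}_i\lambda$ is a node of $B(\Lambda_0)^L$ then $\mathcal{R}\mu = \mathcal{R}(\widehat{f}_i\lambda) = \tilde f_i(\mathcal{R}\lambda)$ is obtained from a node of $B(\Lambda_0)$ by a lowering operator, hence is a node of $B(\Lambda_0)$; conversely every node of $B(\Lambda_0)$ is reached from $\emptyset$ by a sequence of $\tilde f_i$'s, and the corresponding sequence of $\widehat{f}_i$'s applied to $\emptyset$ produces a preimage. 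For injectivity I would argue that distinct nodes of $B(\Lambda_0)^L$ lie in distinct regularization classes: by the Proposition in Section \ref{locked}, a node of $B(\Lambda_0)^L$ is the unique smallest partition in dominance order in its regularization class, so two nodes with the same regularization are literally equal. Since $\mathcal{R}\lambda = \mathcal{R}\mu$ exactly means $\lambda,\mu$ are in the same regularization class, injectivity follows.

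Finally I would verify the remaining crystal-isomorphism axioms. Commutation with $\tilde f_i$ and $\tilde e_i$ is exactly Theorem \ref{crystal_commutes}; equality of the statistics $\widehat\varphi_i(\lambda)=\varphi_i(\mathcal{R}\lambda)$ and $\widehat\varepsilon_i(\lambda)=\varepsilon_i(\mathcal{R}\lambda)$ is the Corollary to Lemma \ref{ladderlemma}; and the weight function is preserved because, as noted in the remarks of Section \ref{new_crystal}, $\mathrm{wt}(\lambda)$ depends only on the multiset of residues of the boxes of $\lambda$, and regularization moves boxes only within ladders, which preserves residues. Putting these together, $\mathcal{R}$ is a weight-preserving bijection on nodes intertwining all the crystal operators and the $\varphi,\varepsilon$ functions, which is precisely a crystal isomorphism $B(\Lambda_0)^L \cong B(\Lambda_0)$. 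The main obstacle, and the only part requiring real work, is injectivity of $\mathcal{R}$ on the nodes of $B(\Lambda_0)^L$, i.e. that these nodes are genuinely distinct representatives of their regularization classes — but this has been packaged into the "smallest in dominance order" results of Section \ref{locked}, so here it reduces to citing those statements correctly.
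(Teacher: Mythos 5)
Your proposal is correct and follows essentially the same route as the paper: the paper's proof simply declares $\mathcal{R}$ to be the isomorphism with the deregularization map $\mathcal{S}$ of Section \ref{locked} as its inverse, and your injectivity argument (nodes of $B(\Lambda_0)^L$ are the unique dominance-smallest representatives of their regularization classes) is exactly the content packaged into the statement that $\mathcal{S}$ inverts $\mathcal{R}$. You have merely expanded the "routine to check" details (surjectivity by induction via Theorem \ref{crystal_commutes}, the $\varphi,\varepsilon$ statistics, and weight preservation), which is a faithful elaboration rather than a different argument.
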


\begin{proof}
The map $\mathcal{R} : ladd_\ell \to reg_\ell$ gives the isomorphism.
The map $\mathcal{S}$ described in Section \ref{locked} is the inverse of $\mathcal{R}$.
The other crystal isomorphism axioms are routine to check.
\end{proof}

\begin{example}
Let $\lambda = (2,1,1,1)$ and $\ell = 3$. Then $\mathcal{R} \lambda = (2,2,1)$, $\widehat{f}_2 \lambda = (2,1,1,1,1)$ and $\tilde{f}_2 (2,2,1) = (3,2,1)$. But $\mathcal{R} (2,1,1,1,1) = (3,2,1)$.

\[
\begin{CD}
(2,1,1,1) @>\widehat{f}_2>>(2,1,1,1,1)\\
@VV\mathcal{R}V @VV\mathcal{R}V\\
(2,2,1) @>\widetilde{f}_2>> (3,2,1)
\end{CD}
\]

\end{example}

\section{The nodes of $ladd_\ell$}\label{MullMap}

\subsection{Characterization via hook lengths}
 It was pointed out to the author by Fayers that a characterization of the nodes of $ladd_\ell$ can be described in terms of hook lengths and arm lengths. We now include this characterization.
 
 \begin{theorem} A partition $\lambda$ belongs to the crystal $ladd_\ell$ if and only if there does not exist a box $(i,j) \in \lambda$ such that $h_{(i,j)}^\lambda = \ell \cdot \mathrm{arm}(i,j)$.
 \end{theorem}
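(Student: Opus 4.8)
The plan is to translate the statement into the language of locked boxes from Section~\ref{locked}. By the main results of that section together with Corollary~\ref{crystalisisom} (the map $\mathcal{S}$ is the inverse of regularization and lands in $B(\Lambda_0)^L$), a partition $\lambda$ is a node of $B(\Lambda_0)^L$ if and only if $\lambda=\mathcal{S}\lambda$, equivalently, every box of $\lambda$ is locked. So it suffices to prove: $\lambda$ has an unlocked box if and only if there is a box $(i,j)\in\lambda$ with $h_{(i,j)}^{\lambda}=\ell\cdot\mathrm{arm}(i,j)$.

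First I would note that the implication ``$\lambda$ has an unlocked box $\Rightarrow$ such a box $(i,j)$ exists'' is exactly what is carried out inside the proof of Theorem~\ref{lylepart}: starting from an unlocked box one passes up columns and across to row-ends to reach an unlocked box of the form $(c,\lambda_c)$ sitting directly below a locked box (or in the first row), which therefore violates the type~I condition; running down its ladder to the first unoccupied position $(r,j)$ whose in-column predecessor is occupied forces $\lambda'_j=c+(\lambda_c-j)(\ell-1)-1$, and then $h_{(c,j)}^{\lambda}=\mathrm{arm}(c,j)+\mathrm{leg}(c,j)+1=(\lambda_c-j)\ell=\ell\cdot\mathrm{arm}(c,j)$ with $(c,j)\in\lambda$ and $\mathrm{arm}(c,j)=\lambda_c-j\geq 1$. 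Thus the ``$\Leftarrow$'' direction of the theorem (no such box $\Rightarrow$ every box locked $\Rightarrow$ node) is merely the contrapositive of that argument and needs no new work.

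For the reverse implication, suppose $(i,j)\in\lambda$ satisfies $h_{(i,j)}^{\lambda}=\ell a$ with $a=\mathrm{arm}(i,j)$. From $h_{(i,j)}^{\lambda}=\mathrm{arm}(i,j)+\mathrm{leg}(i,j)+1$ we get $\mathrm{leg}(i,j)=(\ell-1)a-1$, and in particular $a\geq 1$ (otherwise $h_{(i,j)}^{\lambda}=0$). Let $x=(i,\lambda_i)=(i,j+a)$ be the last box of row $i$, and let $\mathcal{L}$ be its ladder. The position $q=(i+a(\ell-1),j)$ lies on $\mathcal{L}$ strictly below $x$; since the bottom box of column $j$ is $(i+(\ell-1)a-1,j)$ and $(\ell-1)a-1<a(\ell-1)$, the position $q$ is not in $\lambda$ while the box immediately above it in its column is the bottom box of column $j$ and hence lies in $\lambda$. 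So $\mathcal{L}$ contains an unoccupied space below $x$ whose directly-above neighbour is occupied, and therefore $x$ fails the type~I locked condition; being at the end of its row, $x$ cannot be type~II locked either. Hence $x$ is unlocked, so $\lambda\neq\mathcal{S}\lambda$ and $\lambda$ is not a node of $B(\Lambda_0)^L$.

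I do not expect a deep obstacle here; the step requiring the most care is the bookkeeping around the definitions --- correctly invoking the ``node $\iff$ all boxes locked'' equivalence from Section~\ref{locked} and Corollary~\ref{crystalisisom}, being precise that ``directly above'' in the type~I condition means the box one row up in the same column (so that the bottom box of column $j$ is exactly what certifies the violation), and dispatching the degenerate case $\mathrm{arm}(i,j)=0$. With this in place the argument is short, and it also sharpens Theorem~\ref{lylepart}, since a box with $h_{(i,j)}^{\lambda}=\ell\cdot\mathrm{arm}(i,j)$ automatically satisfies $\tfrac{1}{\ell}h_{(i,j)}^{\lambda}=\mathrm{arm}(i,j)\leq\min\{\mathrm{arm}(i,j),\mathrm{leg}(i,j)\}$ for $\ell>2$.
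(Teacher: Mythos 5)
Your proof is correct and follows essentially the same route as the paper's: both directions reduce the statement to the ``node of $B(\Lambda_0)^L$ if and only if every box is locked'' characterization, reuse the reduction from the proof of Theorem \ref{lylepart} to locate an unlocked row-end box violating the type I condition, and identify the empty ladder position directly below the bottom of a column with the equation $h_{(i,j)}^\lambda=\ell\cdot\mathrm{arm}(i,j)$. Your write-up simply makes explicit some bookkeeping (the case $\mathrm{arm}(i,j)=0$ and the exact coordinates of the violating position) that the paper leaves implicit.
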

 
 \begin{proof}
 If $\lambda$ has a box $(i,j)$ with hook length $\ell \cdot \mathrm{arm}(i,j)$ then the box $(i, \lambda_i)$ will 
be unlocked, as it is on the same ladder as the empty position directly below the last box in column $j$. 
Hence $\lambda$ is not in $ladd_\ell$. 
 
 If $\lambda$ does not belong to the crystal $ladd_\ell$ then there exists an unlocked box $(a,b)$ in
 the diagram of $\lambda$. This implies that there exists 
an unlocked box $(i,\lambda_i)$ which is either directly below a locked box or in row $1$. 
This box is then unlocked because it violates a type I lock rule, which means that in the same ladder as
 $(i,\lambda_i)$ there is an empty position $(n,j)$ directly below a box $(n-1,j)$. But then the box $(i,j)$
 will satisfy $h_{(i,j)}^\lambda = \ell \cdot \mathrm{arm}(i,j)$.
 \end{proof} 

\begin{section}{Acknowledgements}
The author would like to thank several people who made this paper possible. I first showed the idea of the ladder crystal to Steve Pon in the summer of 2007 and with him we sketched out some work that would later be the basis for proving that the two crystals were isomorphic. Later on, I ran into a problem showing the existence of a smallest partition in dominance order in each regularization class and together with Brant Jones we developed a construction for finding such a partition, which I've generalized in this paper in Section \ref{locked}. After sending a draft to Matthew Fayers, he responded with helpful suggestions and corrections. His comments also led me to write Section \ref{MullMap}. Finally, I would like to thank my advisor Monica Vazirani. Finding answers to her questions led me to discover the ladder crystal, and numerous ensuing discussions helped in the coherency and direction of this paper.
\end{section}

\bibliographystyle{amsalpha}

\begin{thebibliography}{A}

\bibitem{BOX} C. Bessenrodt, J. Olsson and M. Xu, On properties of the Mullineux map with an application to Schur modules, Math. Proc. Cambridge Philos. Soc. 126 (1999), 443Ð59.

\bibitem{HK} Hong and Kang, Introduction to Quantum Groups and Crystal Bases, Graduate Studies in Mathematics, Volume 42.

\bibitem{Jold} G.D. James, On the decomposition matrices of the symmetric groups. II, J. Algebra 43 (1976), pp. 45Ð54.
\bibitem{J} G.D. James, The decomposition matrices of $GL_n(q)$ for $n\leq10$, \textit{Proc. Lond. Math. Soc.} (3), \textbf{60} (1990), 225-264.
\bibitem{JK}
G.D. James and A.Kerber, \textit{The Representation Theory of the Symmetric Group}, Encyclopedia of Mathematics, \textbf{16}, 1981.
\bibitem{JM}
G.D. James and A. Mathas, A q-analogue of the Jantzen-Schaper theorem, \textit{Proc. Lond. Math. Soc.}, \textbf{74} (1997), 241-274. 
 \bibitem{K} M. Kashiwara, On crystal bases, in Representations of groups (Banff 1994), \textit{CMS Conf. Proc.} \textbf{16} (1995), 155-197. 
 \bibitem{Kl} A. Kleshchev, \textit{Linear and Projective Representations of Symmetric Groups}, Cambridge Tracts in Mathematics \textbf{163}.
\bibitem{LLT} A. Lascoux, B. Leclerc, and J.-Y. Thibon, Hecke algebras at roots of unity and crystal bases of quantum affine algebras, \textit{Comm. Math. Phys. } \textbf{181} (1996), 205-263.
 \bibitem{MM} K.C. Misra and T. Miwa, Crystal base for the basic representation of $U_q( \mathfrak{sl}_n )$, \textit{Commun. Math. Phys.} \textbf{134} (1990), 79-88.

\end{thebibliography}

\end{document}